\documentclass{amsproc}

\newtheorem{theorem}{Theorem}[section]
\newtheorem{lemma}[theorem]{Lemma}
\newtheorem{proposition}[theorem]{Proposition}

\theoremstyle{definition}
\newtheorem{definition}[theorem]{Definition}
\newtheorem{example}[theorem]{Example}

\theoremstyle{remark}
\newtheorem{remark}[theorem]{Remark}
\numberwithin{equation}{section}
\newcommand*{\bigchi}{\mbox{\Large$\chi$}}

\begin{document}

\title[Stability of quaternion matrix polynomials]
{Stability of quaternion matrix polynomials}

	\author[Pallavi]{Pallavi Basavaraju}
\address{Pallavi Basavaraju\\
	Department of Mathematics\\
	Dr. G. Shankar Government Women's First Grade College and P.G. Study Centre\\
	Ajjarakadu, Udupi, Karnataka -- 576101, Karnataka, India}
\email{pallavipoorna20@iisertvm.ac.in, pallavipoorna6@gmail.com}

\author[Shrinath]{Shrinath Hadimani}
\address{Shrinath Hadimani\\
	Department of Mathematics\\
	Manipal Institute of Technology, Manipal 
	Academy of Higher Education\\
	Manipal -- 576104, Karnataka, India}
\email{srinathsh3320@iisertvm.ac.in, shrinath.hadimani@manipal.edu}

\author[Sachindranath]{Sachindranath Jayaraman}
\address{Sachindranath Jayaraman\\
	School of Mathematics\\ 
	Indian Institute of Science Education and Research Thiruvananthapuram\\ 
	Maruthamala P.O., Vithura, Thiruvananthapuram -- 695 551, Kerala, India.}
\email{sachindranathj@iisertvm.ac.in, 
	sachindranathj@gmail.com}

\subjclass[2020]{Primary 15A18, 15B33; Secondary 12E15, 15A66}

\keywords{Right quaternion matrix polynomials, right eigenvalues of 
quaternion matrix polynomials, complex adjoint matrix polynomials, stability 
and hyperstability of quaternion matrix polynomials, multivariate quaternion matrix polynomials.}

\begin{abstract}
A right quaternion matrix polynomial is an expression of the form 
$P(\lambda)= \displaystyle \sum_{i=0}^{m}A_i \lambda^i$, where $A_i$'s are 
$n \times n$ quaternion matrices with $A_m \neq 0$. The aim of this manuscript is to 
determine the location of right eigenvalues of $P(\lambda)$ relative to certain subsets 
of the set of quaternions. In particular, we extend the notion of (hyper)stability of 
complex matrix polynomials to quaternion matrix polynomials and obtain location 
of right eigenvalues of $P(\lambda)$ using the following methods: 
$(1)$ we give a relation between (hyper)stability of a quaternion matrix polynomial and 
its complex adjoint matrix polynomial, $(2)$ we prove that $P(\lambda)$ is stable 
with respect to an open (closed) ball in the set of quaternions, centered at a complex 
number if and only if it is stable with respect to its intersection with the set of 
complex numbers and $(3)$ as a consequence of $(1)$ and $(2)$, we prove that right 
eigenvalues of $P(\lambda)$ lie between two concentric balls of specific radii in the 
set of quaternions centered at the origin. A generalization of the 
Enestr{\"o}m-Kakeya theorem to quaternion matrix polynomials is obtained as an 
application. We identify classes of quaternion matrix polynomials for which stability and hyperstability are equivalent. We finally deduce hyperstability of certain univariate 
quaternion matrix polynomials via stability of certain multivariate quaternion matrix polynomials.  
\end{abstract}

\maketitle

\section{Introduction}\label{sec-1}

One of the prominent results in mathematics is the well-known fundamental 
theorem of algebra, which says any scalar polynomial with complex coefficients and 
degree $m \geq 1$ has a zero in the set of complex numbers and the number of zeros 
is equal to $m$. Although this theorem gives a precise number of zeros of a polynomial, 
when the degree of the polynomial exceeds $4$, it is difficult to calculate the 
zeros algebraically. Therefore, finding regions in which the zeros lie plays a crucial role  
in certain iterative methods \cite{Pan}. There is a vast literature on the location of zeros 
of polynomials. Some of these are \cite{Dehmer}, \cite{Geleta-Alemu}, \cite{Joyal-Labelle-Rahman}, \cite{Szabo}, \cite{Van} and references cited therein. 

\medskip

A natural generalization of scalar polynomials is to allow the coefficients to be complex 
matrices. Polynomials, whose coefficients are complex matrices, also known as matrix 
polynomials in the literature, arise in several problems in applied mathematics. An
excellent monograph on this subject is the one by Gohberg et al. \cite{Gohberg}. 
The eigenvalues of a matrix polynomial are the same as the zeros of the determinant 
(which is scalar complex polynomial) of the matrix polynomial. Therefore, one can apply 
methods given in the above cited references and scalar stability tests (see \cite{Hurwitz}, 
\cite{Marden}) to this complex polynomial to obtain the location of eigenvalues. However, 
when the size of coefficient matrices is large, computing the determinant is difficult. 
Therefore, the location of eigenvalues of complex matrix polynomials has been an 
interesting problem. The early attempts to find bounds on eigenvalues of complex matrix 
polynomials are due to Higham and Tisseur \cite{Higham-Tisseur}, by associating block 
matrices and scalar polynomials to the given matrix polynomial. Various other bounds 
obtained by associating scalar polynomials to the given matrix polynomial can be found 
in \cite{Bini-Noferini}, \cite{Le-Du-Nguyen}, \cite{Mehl-Mehrmann} and \cite{Roy-Bora}. 
Bounds for eigenvalues of matrix polynomials, very similar to those of Higham and Tisseur 
were obtained recently in \cite{Hans-Raouafi}. The readers may refer to \cite{BHMST} and 
\cite{Mehrmann-Voss} for various applications of matrix polynomials and related 
computational issues. In \cite{Basavaraju-Hadimani-Jayaraman} and \cite{Hadimani-Basavaraju-Jayaraman}, the authors have extended some of these methods 
to rational matrices.

\medskip

Recently, polynomials whose coefficients are from the noncommutative ring of quaternions 
has received interest of some mathematicians. Locating the zeros of such polynomials pose 
interesting questions due to the noncommutativity of quaternion multiplication. Readers 
may refer to the following recent articles \cite{Ahmad-Ali-2} and \cite{Mir} and the 
references cited therein. 

\medskip

For quaternion matrices as well as matrix polynomials whose coefficients have entries from 
quaternions, (henceforth known as quaternion matrix polynomials), the definition of 
the determinant is not same as that of the complex case. However, there are notions of 
determinant that have been used in the literature (see for instance, 
\cite{Pereira-Rocha} and \cite{Rodman}). Bounds on the moduli of eigenvalues of 
quaternion matrices and quaternion matrix polynomials can be determined 
by localization and perturbation theorems such as the Ger\v{s}gorin theorem, Bauer-Fike 
theorem and Ostrowski’s theorem. Bounds are also determined by associating a block matrix 
and the norm of the coefficient matrices. These results can be found in \cite{Ahmad-Ali-1}, 
\cite{Ahmad-Ali-2} and \cite{Ahmad-Ali-Ivan}.  One can also introduce notions of stability 
of (complex) matrix polynomials, which does not involve the determinant. In \cite{Oskar-Wojtylak}, the authors give two such notions for complex matrix polynomials 
to locate eigenvalues.

\medskip

This present work is an attempt to extend the notions of stability and 
hyperstability for quaternion matrix polynomials. Due to the noncommutativity of 
quaternions, these definitions are not the same as in the complex case, although 
some of the results for quaternion matrix polynomials are similar to the ones presented in 
\cite{Oskar-Wojtylak} for complex matrix polynomials. A more elaborate description of 
the results obtained in this manuscript will be given later, after introducing some 
preliminaries.
 
\section{Preliminaries}\label{sec-2}

Preliminary ideas, definitions and basic facts needed in the main results are presented in 
this section. We divide this section into two subsections. We collect basic notions in the 
complex case in the first subsection. The second one concerns notions from the ring of 
quaternions.

\subsection{The complex case}
\hspace*{\fill}\label{subsec-2.1}

$\mathbb{C}$ stands for the field of complex numbers. The space of all $n \times n$ 
matrices over $\mathbb{C}$ is denoted by $M_n(\mathbb{C})$. 
A complex matrix polynomial of degree $m$ and size $n \times n$ is a function $P 
\colon \mathbb{C} \rightarrow M_n(\mathbb{C})$ defined as 
$P(\lambda) =A_m \lambda^m + A_{m-1} \lambda^{m-1} + \cdots + A_1 \lambda + A_0$, 
where coefficients $A_0, A_1, \ldots, A_m \in M_n(\mathbb{C})$ and $A_m \neq 0$. 
If the map $\lambda \mapsto \text{det} P(\lambda)$ is not identically the zero function, 
then the matrix polynomial $P(\lambda)$ is said to be regular. 
A complex number $\lambda_0$ is called an eigenvalue of a regular matrix polynomial $P(\lambda)$, if there is a vector $y \in \mathbb{C}^n \setminus \{0\}$ such that $P(\lambda_0)y$ is a zero vector in $\mathbb{C}^n$. The vector $y \in \mathbb{C}^n$ is called an eigenvector of $P(\lambda)$ corresponding to the eigenvalue $\lambda_0$. Equivalently, if $\text{det}P(\lambda_0) = 0$, then $\lambda_0 \in \mathbb{C}$ is an eigenvalue for a given matrix polynomial $P(\lambda)$. We now introduce two important definitions that set the tone for this work. Let $\Omega$ be any nonempty subset of $\mathbb{C}$ and $P(\lambda)$ be an $n \times n$ regular 
complex matrix polynomial of degree $m$. 

\medskip
\begin{definition}\label{Def-stability-complex}
If for each nonzero vector $y \in \mathbb{C}^n$ and each $\mu \in \Omega$, there exists a 
nonzero vector $z \in \mathbb{C}^n$ such that $z^\ast P(\mu)y \neq 0$, then the matrix 
polynomial $P(\lambda)$ is said to be stable with respect to $\Omega$. 
\end{definition}

\noindent

It is easy to verify that $P(\lambda)$ has no eigenvalue in $\Omega$ if and only if $P(\lambda)$ is stable with respect to $\Omega$. A much stronger notion, namely,  hyperstability of $P(\lambda)$ is defined as follows: 

\medskip
\begin{definition}\label{Def-hyperstability-complex}
If for each nonzero vector $y \in \mathbb{C}^n$, there exists a nonzero vector 
$z \in \mathbb{C}^n$ such that $z^\ast P(\mu)y \neq 0$ for all $\mu \in \Omega$, then 
$P(\lambda)$ is said to be hyperstable with respect to $\Omega$.
\end{definition}

\noindent

Notice that if $P(\lambda)$ is hyperstable with respect to $\Omega$, 
then it is stable with respect to $\Omega$, although they are not equivalent in general 
(see for instance Example $3$ from \cite{Oskar-Wojtylak}).  Some results on hyperstability 
along with certain inequalities for matrix polynomials can also be found in 
\cite{Pikul-Szymanski-Wojtylak}.

\subsection{The quaternions setting} \label{subsec-2.2}
\hspace*{\fill}

The space of real quaternions is defined by $\mathbb{H}:= \{a_0+ a_1i+ a_2 j + a_3 k : a_i \in 
\mathbb{R}\}$ with $i^2=j^2= k^2=ijk=-1, ij=k=-ji, jk=i=-kj$ and $ki=j=-ik$. Quaternions, introduced by the Irish mathematician William Rowan Hamilton in 
1843, are an extension of the complex numbers. They play crucial roles in several areas 
such as $3D$ computer graphics, signal processing, computer visions and robotics, 
quantum mechanics and so on (see \cite{Adler} and \cite{Kuipers} for more details). For 
$q = a_0+ a_1i+ a_2 j+ a_3 k \in \mathbb{H}, \ |q|:=\sqrt{a_0^2 +a_1^2+a_2^2+a_3^2}$ 
denotes the modulus of $q$ and $\overline{q}:= a_0- a_1i -a_2 j- a_3k$ denotes the 
conjugate of $q$. Quaternions $p$ and $q$ are similar if $s^{-1}qs =p$ for some 
$s \in \mathbb{H} \setminus \{0\}$. If quaternions $p$ and $q$ are similar, then 
$|p|=|q|$. Let $M_n(\mathbb{H})$ denote the set of all $n \times n$ matrices over $\mathbb{H}$. For $A=(q_{ij}) \in M_n(\mathbb{H})$, the adjoint of $A$ is defined as 
$A^\ast = (\overline{q}_{ij})$. 

\medskip

Let $\mathbb{H}^n$ denote the set of all column vectors with entries from 
the quaternions. For $v = \begin{bmatrix}
	v_1 \\
	\vdots \\
	v_n
\end{bmatrix} \in \mathbb{H}^n$, $v_j \in \mathbb{H}$ and $\alpha \in \mathbb{H}$, let $v \alpha = 
\begin{bmatrix}
	v_1 \alpha \\
	\vdots \\
	v_n \alpha
\end{bmatrix}$. With respect to the standard addition and the scalar multiplication 
defined above, $\mathbb{H}^n$ is a right quaternion vector space over the division ring 
$\mathbb{H}$. Standard linear algebra concepts such as linear independence, 
spanning sets, basis, direct sums, and so on work in exactly the same way as for finite 
dimensional vector spaces over commutative division rings (see 
\cite{Daniel-Colombo} and \cite{Rodman} for details). The subspace spanned by 
$v_1, v_2, \ldots, v_n \in \mathbb{H}^n$ is denoted by $\text{span}_{\mathbb{H}} \{v_1, v_2, 
\ldots, v_n\} = \{v_1q_1+v_2q_2+ \cdots +v_n q_n : q_i \in \mathbb{H}\}$. 
The inner product on $\mathbb{H}^n$ is defined by 
$\langle u, v \rangle = v^* u$ for $u,v \in \mathbb{H}^n$.

\medskip

The set of quaternions being a noncommutative division ring, there are notions of 
left and right eigenvalues for matrices in $M_n(\mathbb{H})$. If 
$A \in M_n(\mathbb{H})$, a quaternion $\lambda_0$ is a right (left) eigenvalue if there 
exists a vector $y \in \mathbb{H}^n \setminus \{0\}$ such that $Ay=y\lambda_0$ ($Ay=\lambda_0 y$). It is important to note that if $\lambda_0$ is a right eigenvalue of 
$A$, any quaternion similar to $\lambda_0$ is also a right eigenvalue of $A$. Thus there 
are infinitely many right eigenvalues for $A \in M_n(\mathbb{H})$. However,  given 
$A \in M_n(\mathbb{H})$, there are exactly $n$ complex numbers with nonnegative imaginary parts that are right eigenvalues of $A$ (Theorem $5.4$, \cite{Zhang}). These 
right eigenvalues are called the standard eigenvalues of $A$. Every quaternion that is a 
right eigenvalue of $A$ falls within one of the equivalence classes of these standard eigenvalues. In contrast, determining the number of left eigenvalues and their equivalence classes remains an open question. Unlike right eigenvalues, a quaternion similar to a left 
eigenvalue need not be a left eigenvalue. In \cite{Liping-Wasin}, the authors prove 
that for a $2 \times 2$ quaternion matrix the number of left eigenvalues is either 
at most $2$ or infinite and for an $n \times n$ complex matrix the number of left 
quaternion eigenvalues is either at most $n$ or infinite. However, it remains unknown 
if an $n \times n$ quaternion matrix with a finite number of left eigenvalues has at 
most $n$ elements. Thus, left eigenvalues have received less attention in the literature 
than right eigenvalues.

\medskip

We now introduce quaternion matrix polynomials. Due to noncommutativity 
of $\mathbb{H}$ there are notions of right and left quaternion matrix polynomials.

\begin{definition}
An $n \times n$ right quaternion matrix polynomial of degree $m$ is a function 
$P \colon \mathbb{H} \rightarrow M_n(\mathbb{H})$, where $P(\lambda) = 
\displaystyle \sum_{i=0}^{m}A_i \lambda^i$, with $A_i \in M_n(\mathbb{H})$ and 
$A_m \neq 0$.	
\end{definition}

Note that the indeterminate $\lambda$ appears on the right side of the matrix coefficients. 
We now define eigenvalues of a right quaternion matrix polynomial.

\begin{definition}
A quaternion $\lambda_0 \in \mathbb{H}$ is a right (left) eigenvalue of $P(\lambda)$ 
if there exists a nonzero vector $y \in \mathbb{H}^n$ such that 
$\displaystyle \sum_{i=0}^{m}A_i  y\lambda_0^i  =0$ 
$\left(\displaystyle \sum_{i=0}^{m}A_i  \lambda_0^i y =0\right)$.	
\end{definition}

Similarly, one can define a left quaternion matrix polynomial and its left 
eigenvalues as follows.

\begin{definition}
An $n \times n$ left quaternion matrix polynomial of degree $m$ is a map 
$Q: \mathbb{H} \rightarrow M_n(\mathbb{H})$ given by $Q(\lambda)= \displaystyle \sum_{i=0}^{m} \lambda^i A_i$, with $A_i \in M_n(\mathbb{H})$ and $A_m \neq 0$ 
(the indeterminate $\lambda$ is on the left of the matrix coefficients). A quaternion $\lambda_0$ is a left eigenvalue of $Q(\lambda)$ if there exists a  nonzero vector 
$y \in \mathbb{H}^n$ such that $\displaystyle \sum_{i=0}^{m} \lambda_0^i A_i y =0$. 	
\end{definition}

However, defining right eigenvalues for left quaternion matrix polynomials is not possible 
in the same manner. The exact number of left eigenvalues for a quaternion matrix 
polynomial remains undetermined, making this an underexplored area in mathematical literature. As a result, our focus in this manuscript is entirely on right eigenvalues of right quaternion matrix polynomials, which we henceforth refer to as quaternion matrix polynomials. We also refer to the right eigenvalues as eigenvalues, to avoid confusion. Additional notations and definitions will be introduced as and when needed later on.

\medskip

In the following two examples, we illustrate the difficulty in finding the 
eigenvalues of quaternion matrices and quaternion matrix polynomials due to 
noncommutativity of quaternion multiplication. The first one is an example of a quaternion matrix.

\begin{example}
Let $A = \begin{bmatrix}
	0 & j \\
	i & 0
\end{bmatrix}$. If  $\lambda_0 \in \mathbb{H}$ is a right eigenvalue of $A$, 
then there is a nonzero vector 
$x = \begin{bmatrix}
	x_1 \\
	x_2
\end{bmatrix} \in \mathbb{H}^2$ such that $Ax = x\lambda_0$. On expanding, we get 
$\begin{bmatrix}
	0 & j \\
	i & 0
\end{bmatrix} \begin{bmatrix}
x_1 \\
x_2
\end{bmatrix} = \begin{bmatrix}
x_1 \\
x_2
\end{bmatrix} \lambda_0$. This implies
$jx_2 = x_1 \lambda_0$, $ix_1 = x_2 \lambda_0$ and both $x_1$, $x_2$ are nonzero quaternions. On simplification, we get $kx_2 = x_2 \lambda_0^2$. Due to noncommutativity 
of quaternions, solving this equation is challenging. Similar challenges occur in finding left eigenvalues as well.
\end{example}

The following is an example of a quaternion matrix polynomial where computing the eigenvalues is not 
obvious. We skip the calculations.
\begin{example}
Consider $P(\lambda)= \begin{bmatrix}
1 & 0 \\
0 & 0
\end{bmatrix} \lambda^2 + \begin{bmatrix}
0 & 0 \\
i & 0
\end{bmatrix} \lambda + \begin{bmatrix}
j & 0 \\
0 & 1
\end{bmatrix}$. 
\end{example}

\section[Description]{A brief discussion of the results obtained}\label{sec-3}
A brief description of the results obtained in this manuscript follows. We define the notions 
of stability and hyperstability for quaternion matrix polynomials. Due to  noncommutativity 
of quaternion multiplication, these definitions are not the same as in the complex case. We 
begin by proving some results for quaternion matrix polynomials that are similar to those 
that appear in \cite{Oskar-Wojtylak} for complex matrix polynomials. In particular, 
we prove that if a matrix polynomial is hyperstable with respect to a subset $\Omega$ of $\mathbb{H}$, then it is stable with respect to $\Omega$. Since multiplication in 
quaternions is noncommutative, the proofs are not straightforward generalizations from 
the complex case, and hence we give detailed proofs 
(see Propositions \ref{Prop-stability and eigenvalues} and \ref{Prop-implications}).    
An important result (Theorem \ref{Thm-relation bw matrix polynomial and its complex adjoint}) of this manuscript gives a relation between (hyper)stability of a quaternion matrix polynomial and (hyper)stability of the corresponding complex adjoint matrix polynomial 
(see Definition \ref{complex-adjoint matrix polynomial}). We use this result to prove that stability and hyperstability are generally not equivalent for quaternion matrix polynomials. We then 
proceed to prove that in order to verify stability of a quaternion matrix polynomial with 
respect to an open or closed ball in the set of quaternions, centered at any complex number, 
it suffices to verify its stability with respect to a smaller set in $\mathbb{C}$ (Theorem \ref{Thm-stability on ball}). As an application of Theorems \ref{Thm-relation bw matrix polynomial and its complex adjoint} and \ref{Thm-stability on ball}, we prove stability of a quaternion matrix polynomial with respect to two concentric balls centered at the origin 
(Theorem \ref{H-T thm}). This can be viewed as an analogue 
of a similar result obtained by Higham and Tisseur for complex matrix polynomials 
(Lemma $3.1$ of \cite{Higham-Tisseur}). A generalization of the 
Enestr{\"o}m-Kakeya theorem to quaternion matrix polynomials (Theorem 
\ref{E-K thm Q}) is yet another interesting application of the results obtained in this 
section. Some classes of quaternion matrix polynomials for which stability 
and hyperstability are equivalent are also brought out. We end with a result 
on verifying hyperstability of a quaternion matrix polynomial by introducing these 
notions for certain specific multivariate quaternion matrix polynomials.

\section{Main results}\label{sec-4}

The main results are presented in this section.

\subsection{Stability and hyperstability of quaternion matrix polynomials in one 
variable} 
\hspace*{\fill}\label{subsec-4.1}

We deal with single variable quaternion matrix polynomials and prove results that are 
analogous to those from \cite{Oskar-Wojtylak}. We begin with the definitions of stability, hyperstability and numerical range of quaternion matrix polynomials. Since multiplication 
in $\mathbb{H}$ is noncommutative, these definitions are not the same 
as in the complex case.  Let $\Omega$ denote any nonempty subset of the set of 
quaternions $\mathbb{H}$ and 
\begin{equation}\label{eq-matrix polynomial}
P(\lambda) = A_m \lambda^m + A_{m-1} \lambda^{m-1} + \cdots + A_1 \lambda +A_0 
\end{equation} be an $n \times n$ matrix polynomial of degree $m$, where 
$A_0, A_1, \ldots, A_m \in M_n(\mathbb{H})$ and $A_m \neq 0$.  

\medskip
\begin{definition}\label{Def-stability-quaternion}
$P(\lambda)$ is said to be stable with respect to $\Omega$ if for 
any nonzero vector $y \in \mathbb{H}^n$ and for any $\mu \in \Omega$ there exists a 
nonzero vector $z \in \mathbb{H}^n$ such that 
\begin{equation}
z^*A_m y \mu^m + z^*A_{m-1} y \mu^{m-1} + \cdots +z^*A_1 y \mu +z^*A_0 y \neq 0.
\end{equation}
\end{definition}

\medskip
\begin{definition}\label{Def-hyperstability-quaternion}
$P(\lambda)$ is said to be hyperstable with respect to $\Omega$ if for any nonzero vector 
$y \in \mathbb{H}^n$ there exists a nonzero vector $z \in \mathbb{H}^n$ 
such that 
\begin{equation}
z^*A_m y \mu^m + z^*A_{m-1} y \mu^{m-1} + \cdots +z^*A_1 y \mu +z^*A_0 y \neq 0 \ \ \text{for all}\, \mu \in \Omega.
\end{equation}
\end{definition}

\medskip
\begin{definition}\label{Def-numerical range}
The numerical range of $P(\lambda)$ as given in \eqref{eq-matrix polynomial} is the set 
\begin{equation*}
W(P) = \{ \lambda \in \mathbb{H} \ : \ y^*A_m y \lambda^m + \cdots + 
y^* A_1 y \lambda + y^* A_0 y = 0 \ \text{for some nonzero $y \in \mathbb{H}^n$}\}.
\end{equation*}
\end{definition}

\noindent
The following result gives a relation between the location of eigenvalues and the above 
notion of stability with respect to any subset of $\mathbb{H}$. 

\medskip
\begin{proposition}\label{Prop-stability and eigenvalues}
Let $P(\lambda)$ be as in \eqref{eq-matrix polynomial} and $\Omega \subseteq 
\mathbb{H}$ be nonempty. Then $P(\lambda)$ has no eigenvalues in $\Omega$ 
if and only if it is stable with respect to $\Omega$.
\end{proposition}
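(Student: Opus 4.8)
The plan is to recognize that the displayed expression in Definition \ref{Def-stability-quaternion} is nothing but an inner product of $z$ with a vector that depends only on $y$ and $\mu$, after which the equivalence reduces to the elementary fact that a quaternionic vector vanishes precisely when it is orthogonal to every vector.

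First I would fix a nonzero $y \in \mathbb{H}^n$ and $\mu \in \Omega$ and set
\[
w := \sum_{i=0}^{m} A_i y \mu^i \in \mathbb{H}^n .
\]
The key algebraic step is to verify the identity
\[
z^*A_m y \mu^m + \cdots + z^*A_1 y \mu + z^*A_0 y = z^* w
\]
for every $z \in \mathbb{H}^n$. This demands some care because of noncommutativity: writing $z^* = (\overline{z_1}, \ldots, \overline{z_n})$ one has $z^*(A_i y)\mu^i = \sum_j \overline{z_j}\,(A_i y)_j\,\mu^i$, and since each power $\mu^i$ sits to the right of the scalar coefficients throughout, the finite double sum over $i$ and $j$ may be reordered (addition of quaternions being commutative) to produce exactly $z^* w = \sum_j \overline{z_j} w_j$. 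The essential point is that $w$ depends only on $y$ and $\mu$, not on $z$, so the stability clause ``there exists a nonzero $z$ with $z^* w \neq 0$'' becomes a statement purely about the single vector $w$.

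Next I would invoke the fact that, for a quaternionic vector $w$, there exists a nonzero $z$ with $z^* w \neq 0$ if and only if $w \neq 0$. Indeed, if $w = 0$ then $z^* w = 0$ for every $z$; conversely, if $w \neq 0$ the choice $z = w$ gives $z^* w = w^* w = \sum_j |w_j|^2 > 0$, using positive definiteness of the inner product $\langle \cdot, \cdot \rangle$ on $\mathbb{H}^n$. Combining this with the identity above, stability of $P(\lambda)$ with respect to $\Omega$ is equivalent to the assertion that $\sum_{i=0}^{m} A_i y \mu^i \neq 0$ for every nonzero $y \in \mathbb{H}^n$ and every $\mu \in \Omega$.

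Finally I would match this against the definition of a right eigenvalue: a quaternion $\mu \in \Omega$ is an eigenvalue of $P(\lambda)$ exactly when $\sum_{i=0}^{m} A_i y \mu^i = 0$ for some nonzero $y$. Hence the negation of stability holds precisely when some $\mu \in \Omega$ is an eigenvalue, which is the contrapositive of the claimed equivalence, completing both directions at once. I do not expect a genuine obstacle here; the only step that demands attention is the noncommutative bookkeeping in the first display, where one must ensure that pulling $z^*$ across the sum is legitimate, and this is exactly because the indeterminate powers $\mu^i$ remain on the right of the coefficient scalars.
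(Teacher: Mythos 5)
Your proposal is correct and follows essentially the same route as the paper: both factor $z^*$ out of the sum (legitimate since the powers $\mu^i$ act on the right) to reduce stability to the nonvanishing of $w=\sum_{i}A_iy\mu^i$, the only cosmetic difference being that you witness $w\neq 0$ with $z=w$ while the paper tests against the standard basis vectors $e_i$.
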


\begin{proof}
Assume that $P(\lambda)$ has no eigenvalues in $\Omega$. If $P(\lambda)$ is not 
stable with respect to $\Omega$, then there exists a nonzero vector 
$y \in \mathbb{H}^n$ and a quaternion $\mu \in \Omega$ such that for any 
$z \in \mathbb{H}^n \setminus \{0\}$, 
\begin{equation}
z^*A_m y \mu^m + z^*A_{m-1} y \mu^{m-1} + \cdots +z^*A_1 y \mu +z^*A_0 y = 0. 
\end{equation} 
This implies 
\begin{equation}
z^* \left(A_m y \mu^m +A_{m-1} y \mu^{m-1} + \cdots +A_1 y \mu +A_0 y \right) = 0. 
\end{equation}
Taking $z=e_i$, where $e_i$ is the vector in $\mathbb{H}^n$ with $1$ in the $i$th 
component and zeros elsewhere, we get 
\begin{equation}
A_m y \mu^m +A_{m-1} y \mu^{m-1} + \cdots +A_1 y \mu +A_0 y =0, 
\end{equation} contradicting the assumption that $P(\lambda)$ has no eigenvalue 
in $\Omega$.\\
Conversely, let $P(\lambda)$ be stable with respect to $\Omega$. 
If $\mu \in \Omega$ is an eigenvalue of $P(\lambda)$, then there exists a nonzero vector 
$y \in \mathbb{H}^n$ such that 
\begin{equation}
A_m y \mu^m +A_{m-1} y \mu^{m-1} + \cdots +A_1 y \mu +A_0 y = 0. 
\end{equation}
Then for any nonzero vector $z \in \mathbb{H}^n$, 
\begin{equation}
z^*A_m y \mu^m + z^*A_{m-1} y \mu^{m-1} + \cdots +z^*A_1 y \mu +z^*A_0 y = 0, 
\end{equation}
which contradicts the assumption that $P(\lambda)$ is stable with respect to $\Omega$. 
This completes the proof.
\end{proof}

\noindent
We now prove that hyperstability of a matrix polynomial is stronger than stability with 
respect to $\Omega$, even when its numerical range does not intersect $\Omega$.

\medskip
\begin{proposition}\label{Prop-implications}
Let $P(\lambda)$ be as in \eqref{eq-matrix polynomial} and let $\Omega \subseteq \mathbb{H}$ 
be nonempty. Consider the following conditions:
\begin{itemize}
\item[(a)]  $\Omega \cap W(P) = \emptyset$. 
\item[(b)] $P(\lambda)$ is hyperstable with respect to $\Omega$.
\item[(c)] $P(\lambda)$ is stable with respect to $\Omega$.
\end{itemize}
Then \emph{(a) $\implies$ (b) $\implies$ (c)}.  
\end{proposition}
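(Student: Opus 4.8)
The second implication should be the easier one, essentially a specialization of quantifiers: hyperstability provides, for each nonzero $y$, a single $z$ that works uniformly for all $\mu \in \Omega$, whereas stability only asks for a $z$ depending on both $y$ and $\mu$. So given hyperstability and an arbitrary pair $(y,\mu)$ with $y \neq 0$ and $\mu \in \Omega$, I would simply take the $z$ guaranteed by hyperstability for that $y$; since $z^*P(\mu)y \neq 0$ holds for all $\mu \in \Omega$, in particular it holds for the chosen $\mu$, which is exactly what stability demands. This is the same logic as the complex case and requires no calculation.

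**The first implication (a) $\implies$ (b) is where the real work lies.** Assume $\Omega \cap W(P) = \emptyset$. Fix a nonzero $y \in \mathbb{H}^n$ and consider the single vector $w = w(y) = A_m y \mu^m + \cdots + A_1 y \mu + A_0 y$ — except that this depends on $\mu$, so the key object is really the map $\mu \mapsto P(\mu)y$ (with $\lambda$ acting on the right). The goal is to produce one nonzero $z$ with $z^* P(\mu) y \neq 0$ for every $\mu \in \Omega$. The natural candidate is $z = y$ itself: then $z^* P(\mu) y = y^* A_m y \,\mu^m + \cdots + y^* A_0 y$, and the hypothesis $\mu \notin W(P)$ says precisely that this expression is nonzero for every $\mu \in \Omega$. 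Thus taking $z = y$ immediately yields hyperstability, provided $y \neq 0$ forces $z = y \neq 0$, which it does.

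**The point I expect to scrutinize most carefully is whether the candidate $z = y$ is legitimately nonzero and whether the quaternionic ordering of the quantifiers is respected.** In Definition \ref{Def-numerical range}, $W(P)$ is defined using $z = y$ on both sides (the expression $y^* A_i y \lambda^i$), so the condition $\mu \notin W(P)$ for all $\mu \in \Omega$ translates verbatim into $y^* A_m y\, \mu^m + \cdots + y^* A_0 y \neq 0$ for all $\mu \in \Omega$ and all nonzero $y$. The only subtlety is noncommutativity: one must keep the scalars $\mu^i$ on the right of $y^* A_i y$ throughout and not inadvertently reorder them. Since the definition of $W(P)$ already fixes this convention, no reordering is needed, and the implication follows directly. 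I would therefore write the proof as two short paragraphs: first choosing $z = y$ to get (b) from (a), then specializing the uniform $z$ to get (c) from (b).
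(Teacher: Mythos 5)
Your proposal is correct and follows exactly the paper's own argument: choosing $z = y$ and invoking the definition of $W(P)$ gives (a) $\implies$ (b), and (b) $\implies$ (c) is the quantifier specialization you describe. Nothing further is needed.
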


\begin{proof}
We first prove (a) $\implies$ (b). Let $y \in \mathbb{H}^n \setminus \{0\}$. Since 
$W(P) \cap \Omega = \emptyset$, for any $\lambda \in \Omega$, we have 
$\displaystyle \sum_{i=0}^{m} y^*A_i y \lambda^i \neq 0$. In particular, taking 
$z =y$, we obtain $\displaystyle \sum_{i=0}^{m} z^*A_i y \lambda^i \neq 0$ for all 
$\lambda \in \Omega$. This proves that $P(\lambda)$ is hyperstable with respect to $\Omega$. 
The second implication follows from the definition of stability.     
\end{proof}

\smallskip
\noindent
As in  the complex case, the reverse implications are not true in Proposition 
\ref{Prop-implications}. Our first major result of this 
manuscript (Theorem \ref{Thm-relation bw matrix polynomial and its complex adjoint}), 
which gives a relation between (hyper)stability of a quaternion matrix polynomial and its 
complex adjoint matrix polynomial illustrates this. We begin with the 
definition of the complex adjoint matrix of a quaternion matrix.

\begin{definition}\label{complex-adjoint matrix}
 Any matrix 
$A \in M_n(\mathbb{H})$ can be expressed as $A=A_1 +A_2 j$, where 
$A_1$, $A_2 \in M_n(\mathbb{C})$. The complex adjoint matrix of $A$ is a 
$2n \times 2n$ complex block matrix defined as 
$\bigchi_A := \begin{bmatrix}
	A_1 & A_2 \\
	-\bar{A_2} & \bar{A_1}.
\end{bmatrix}$.	
\end{definition}

Readers may refer to  \cite{Rodman} or \cite{Zhang} for more details about various 
properties of the complex adjoint matrix. The above definition suggests that given a 
quaternion matrix polynomial $P(\lambda)$, we can associate to $P(\lambda)$, a 
complex matrix polynomial $P_{\chi}(\lambda)$ as follows.

\begin{definition}\label{complex-adjoint matrix polynomial}
Given an $n \times n$ quaternion matrix polynomial $P(\lambda)=\displaystyle 
\sum_{i=0}^{m} A_i\lambda^i$ we associate a $2n \times 2n$ complex matrix polynomial 
$P_{\chi}: \mathbb{C} \rightarrow M_{2n}(\mathbb{C})$ 
defined by $P_\chi(\lambda)=\displaystyle \sum_{i=0}^{m} \bigchi_{A_i}\lambda^i$. 
We call $P_\chi(\lambda)$ as the complex adjoint matrix polynomial of $P(\lambda)$. 
\end{definition}

\begin{theorem}\label{Thm-relation bw matrix polynomial and its complex adjoint}
Let $P(\lambda)$ be as in \eqref{eq-matrix polynomial} and 
$\Omega \subseteq \mathbb{H}$ be nonempty. Then (i) $P(\lambda)$ is stable with 
respect to $\Omega \cap \mathbb{C}$ if and only if its complex adjoint matrix polynomial 
$P_\chi (\lambda)$ is stable with respect to $\Omega \cap \mathbb{C}$. 
(ii) $P(\lambda)$ is hyperstable with respect to $\Omega \cap \mathbb{C}$ 
if and only if its complex adjoint matrix polynomial $P_\chi (\lambda)$ is hyperstable with respect to $\Omega \cap \mathbb{C}$.
\end{theorem}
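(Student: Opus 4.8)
The plan is to transfer the whole problem to $M_{2n}(\mathbb{C})$ through the companion map on vectors that accompanies the complex adjoint $\bigchi$. Writing $A=A_1+A_2j$ and $y=y_1+y_2j$ with $A_i\in M_n(\mathbb{C})$, $y_i\in\mathbb{C}^n$, I would introduce $\phi\colon\mathbb{H}^n\to\mathbb{C}^{2n}$, $\phi(y)=\begin{bmatrix}y_1\\-\overline{y_2}\end{bmatrix}$, and first record four properties by direct computation: $\phi$ is an additive bijection; it is right $\mathbb{C}$-linear, $\phi(yc)=\phi(y)c$ for $c\in\mathbb{C}$; it intertwines the two actions, $\phi(Ay)=\bigchi_A\,\phi(y)$; and the identity that carries the argument, namely that the complex part of the quaternion $z^{*}Ay$ equals $\phi(z)^{*}\bigchi_A\,\phi(y)$. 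The last two are short but delicate because of the rule $jc=\overline{c}j$, and this is exactly where the placement of conjugates must be pinned down.

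The decisive consequence, valid \emph{because} $\mu\in\Omega\cap\mathbb{C}$ so that $\mu^i$ passes through the right $\mathbb{C}$-linear map $\phi$, is that for every complex $\mu$ and every $y\in\mathbb{H}^n$,
\[
\phi\Big(\sum_{i=0}^{m}A_iy\mu^{i}\Big)=\Big(\sum_{i=0}^m\bigchi_{A_i}\mu^i\Big)\phi(y)=P_\chi(\mu)\phi(y),
\]
and, pairing with $z$, the complex part of $\sum_i z^{*}A_iy\mu^i$ equals $\phi(z)^{*}P_\chi(\mu)\phi(y)$. For part (i) I would then reduce both stability notions to the absence of eigenvalues: on the quaternion side this is Proposition \ref{Prop-stability and eigenvalues}, and by the same reasoning (testing against standard basis vectors) $P_\chi$ is stable with respect to $\Omega\cap\mathbb{C}$ iff $P_\chi(\mu)$ is invertible for each $\mu\in\Omega\cap\mathbb{C}$. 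Since $\phi$ is a bijection, the displayed identity shows that for complex $\mu$ one has $\sum_iA_iy\mu^i=0$ with $y\neq0$ iff $P_\chi(\mu)\phi(y)=0$ with $\phi(y)\neq0$; hence $\mu$ is a right eigenvalue of $P$ iff it is an eigenvalue of $P_\chi$, and (i) follows.

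For part (ii), the implication ``$P_\chi$ hyperstable $\Rightarrow P$ hyperstable'' is immediate: given $y$, a complex witness $\zeta$ for $\eta=\phi(y)$ gives $z=\phi^{-1}(\zeta)$, and since the complex part of $\sum_i z^{*}A_iy\mu^i$ is $\zeta^{*}P_\chi(\mu)\phi(y)\neq0$, the quaternion itself is nonzero for all $\mu$. The reverse implication is the step I expect to be the main obstacle, precisely because a nonzero quaternion may have vanishing complex part, so the witness $z$ provided by quaternion hyperstability need not make $\phi(z)$ a complex witness. Unwinding the definitions, $z$ only guarantees that the pair $\big(\phi(z)^{*}P_\chi(\mu)\phi(y),\,\phi(zj)^{*}P_\chi(\mu)\phi(y)\big)$ is never $(0,0)$ on $\Omega\cap\mathbb{C}$ — a codimension-two avoidance — whereas a complex witness requires a \emph{single} functional to be nonvanishing, a codimension-one avoidance.

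To close this gap I would exploit two structural facts. First, hyperstability forces stability (Proposition \ref{Prop-implications}), so by part (i) the adjoint $P_\chi$ is stable, i.e. $\det P_\chi(\mu)\neq0$ on $\Omega\cap\mathbb{C}$. Second, the complex adjoint satisfies the conjugate symmetry $J\,\overline{P_\chi(\overline\mu)}\,J^{-1}=P_\chi(\mu)$ with $J=\begin{bmatrix}0&I\\-I&0\end{bmatrix}$, which produces a Hermitian reflection identity controlled by $\det P_\chi$ (in the scalar case $v(\overline\mu)^{*}v(\mu)=\|\eta\|^2\det P_\chi(\mu)$ for $v(\mu)=P_\chi(\mu)\phi(y)$, with its matricial analogue in general). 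Passing to the quotient $\mathbb{C}^{2n}/\{\phi(z),\phi(zj)\}^{\perp}\cong\mathbb{C}^2$, the problem becomes showing that the rational map $\mu\mapsto[\phi(z)^{*}v(\mu):\phi(zj)^{*}v(\mu)]$ omits a point of $\mathbb{P}^1$ over $\Omega\cap\mathbb{C}$; the non-vanishing of the $\det$-type quantity, together with a leading-coefficient positivity and degree count, is exactly what prevents this map from being surjective and thereby yields the single complex witness. Carrying this omission argument out uniformly over an arbitrary $\Omega\cap\mathbb{C}$ is, I expect, the technical heart of the theorem.
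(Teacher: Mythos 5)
Your framework is sound and, in substance, coincides with the paper's computation: the identity ``the complex part of $z^{*}Ay$ equals $\phi(z)^{*}\bigchi_{A}\,\phi(y)$'' is exactly what the paper obtains by expanding $(z_1^{*}-z_2^{T}j)(A_{i1}+A_{i2}j)(y_1+y_2j)$, and your eigenvalue-based proof of (i) (both stabilities reduce to invertibility of $P_\chi(\mu)$ for $\mu\in\Omega\cap\mathbb{C}$, and $\phi$ is a bijection intertwining the two actions) is a correct and arguably cleaner alternative to the paper's direct two-case manipulation. Your proof of the implication ``$P_\chi$ hyperstable $\Rightarrow$ $P$ hyperstable'' is likewise complete and matches the paper's converse computation.

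The gap is the remaining implication in (ii), which you explicitly leave open, and the repair you sketch would not work as stated. From a quaternion witness $z$ you only learn that the two polynomials $f(\mu)=\phi(z)^{*}P_\chi(\mu)\phi(y)$ and $h(\mu)=\phi(zj)^{*}P_\chi(\mu)\phi(y)$ have no common zero on $\Omega\cap\mathbb{C}$, and you propose to extract a single nonvanishing functional by showing that $\mu\mapsto[f(\mu):h(\mu)]$ omits a point of $\mathbb{P}^1$. But a pair of polynomials with no common zero need not admit a nowhere-vanishing linear combination: for $f(\mu)=\mu^2$ and $h(\mu)=\mu-1$ with $\Omega\cap\mathbb{C}=\mathbb{C}$, every combination $\alpha\mu^{2}+\beta(\mu-1)$ with $(\alpha,\beta)\neq(0,0)$ is a nonconstant polynomial and hence vanishes somewhere, so the associated map onto $\mathbb{P}^1$ is surjective. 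Nothing in your sketch shows that the conjugate symmetry of $P_\chi$ excludes this behaviour, and you do not exploit the freedom of taking $W$ outside $\mathrm{span}\{\phi(z),\phi(zj)\}$ or of varying the witness $z$. To be fair, this is precisely where the paper itself is thinnest: its proof of (i) produces, for each fixed $\mu$, one of two complex witnesses with the choice depending on $\mu$, and the assertion that (ii) ``follows a similar argument'' does not explain how to obtain the single $\mu$-independent witness that hyperstability demands. You have correctly located the difficulty, but locating it is not closing it, so the proposal as written does not establish part (ii).
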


\begin{proof}
We only prove the first statement as the proof of the second statement follows similarly. 
(i) Let $P(\lambda)$ be stable with respect to $\Omega \cap \mathbb{C}$. 
To prove that $P_{\chi}(\lambda)$ is stable with respect to $\Omega \cap \mathbb{C}$, consider a nonzero vector 
$Y=\begin{bmatrix}
	y_1 \\
	y_2
\end{bmatrix} \in \mathbb{C}^{2n}$ and $\mu \in \Omega \cap \mathbb{C}$, where $y_1$, 
$y_2 \in \mathbb{C}^n$. Define $y := y_1 -\overline{y}_2j$. Then 
$y \in \mathbb{H}^n \setminus \{0\}$. Since $P(\lambda)$ is stable with respect to 
$\Omega \cap \mathbb{C}$, there exists $z \in \mathbb{H}^n \setminus \{0\}$ such that 
$\displaystyle \sum_{i=0}^{m} z^*A_i y \mu^i \neq 0$. Writing $z = z_1 +z_2 j$, 
where $z_1$, $z_2 \in \mathbb{C}^n$ and $A_i = A_{i1} +A_{i2}$, where $A_{i1}$, $A_{i2} 
\in M_n(\mathbb{C})$ for $1 \leq i \leq m$, we have $z^* = z_1^*-z_2^Tj$ and 
\begin{equation*}
\displaystyle \sum_{i=0}^{m} z^*A_i y   \mu^i = \sum_{i=0}^{m}(z_1^* -z_2^Tj) (A_{i1}+A_{i2}j) 
(y_1 -\overline{y}_2j) \mu^i \neq 0.  
\end{equation*} 

\noindent
On expanding, we get

$\displaystyle \sum_{i=0}^{m} \left( z_1^*A_{i1}y_1 +z_1^*A_{i2}y_2 +z_2^T \overline{A}_{i2}y_1 
- z_2^T \overline{A}_{i1} y_2 \right) \mu^i + \\
\left(\displaystyle \sum_{i=0}^{m} \left( z_1^*A_{i2}\overline{y}_1 -z_1^*A_{i1} \overline{y}_2 
-z_2^T\overline{A}_{i1}\overline{y}_1 - z_2^T \overline{A}_{i2} \overline{y}_2 \right) 
\overline{\mu}^i \right)j \neq 0$. 

\noindent
We therefore have 
\begin{equation} \label{Eqn-3.9}
\displaystyle \sum_{i=0}^{m} \left( z_1^*A_{i1}y_1 +z_1^*A_{i2}y_2 + z_2^T \overline{A}_{i2}y_1 
- z_2^T \overline{A}_{i1}y_2 \right) \mu^i \neq 0 
\end{equation} or
\begin{equation}\label{Eqn-3.10}
\displaystyle \sum_{i=0}^{m} \left( z_1^*A_{i2}\overline{y}_1 -z_1^*A_{i1} \overline{y}_2 
-z_2^T\overline{A}_{i1}\overline{y}_1 - z_2^T \overline{A}_{i2} \overline{y}_2 \right) \overline{\mu}^i \neq 0. 
\end{equation} 

\noindent
From Equations \eqref{Eqn-3.9} and \eqref{Eqn-3.10} we have 
\begin{equation}
\sum_{i=0}^{m} \begin{bmatrix}
z_1^* & -z_2^T
\end{bmatrix} \begin{bmatrix}
A_{i1} & A_{i2} \\
-\overline{A}_{i2} & \overline{A}_{i1}
\end{bmatrix} \begin{bmatrix}
y_1 \\
y_2
\end{bmatrix} \mu^i \neq 0,
\end{equation} or 
\begin{equation}
\sum_{i=0}^{m} \begin{bmatrix}
-z_2^* & -z_1^T
\end{bmatrix} \begin{bmatrix}
A_{i1} & A_{i2} \\
-\overline{A}_{i2} & \overline{A}_{i1}
\end{bmatrix} \begin{bmatrix}
y_1 \\
y_2
\end{bmatrix} \mu^i \neq 0.
\end{equation} 

\noindent
This implies 
$\begin{bmatrix}
		z_1 & -\overline{z_2}
\end{bmatrix}^* P_{\chi}(\mu) \begin{bmatrix}
y_1 \\
y_2
\end{bmatrix} \neq 0$ or $\begin{bmatrix}
-z_2 & -\overline{z_1}
\end{bmatrix}^* P_{\chi}(\mu) \begin{bmatrix}
y_1 \\
y_2
\end{bmatrix} \neq 0$.
This proves that $P_\chi(\lambda)$ is stable with respect to $\Omega \cap \mathbb{C}$. 
\\ Conversely, assume that $P_\chi(\lambda)$ is stable with respect to 
$\Omega \cap \mathbb{C}$. To prove that $P(\lambda)$ is stable with 
respect to $\Omega \cap \mathbb{C}$, consider $y \in \mathbb{H}^n \setminus \{0\}$ 
and $\mu \in \Omega \cap \mathbb{C}$. Write $y = y_1 + y_2 j$, where 
$y_1$, $y_2 \in \mathbb{C}^n$. 
Define $Y := \begin{bmatrix}
y_1 \\
-y_2
\end{bmatrix} \in \mathbb{C}^{2n} \setminus \{0\}$. Since $P_{\chi}(\lambda)$ 
is stable with respect to $\Omega \cap \mathbb{C}$, there exists 
$Z \in \mathbb{C}^{2n} \setminus \{0\}$ such that $Z^*P_\chi(\mu)Y \neq 0$. If 
$Z = \begin{bmatrix}
z_1 \\
z_2
\end{bmatrix}$, where $z_1$, $z_2 \in \mathbb{C}^n$, then 
\begin{equation*}
Z^*P_\chi(\mu)Y =\displaystyle \sum_{i=0}^{m} \begin{bmatrix}
z_1^* & z_2^*
\end{bmatrix} \begin{bmatrix}
A_{i1} & A_{i2} \\
-\overline{A}_{i2} & \overline{A}_{i1}
\end{bmatrix} \begin{bmatrix}
y_1 \\
-\overline{y}_2
\end{bmatrix} \mu^i \neq 0.
\end{equation*} 

\noindent
On multiplication we get 
\begin{equation}\label{Eqn-3.13}
\displaystyle \sum_{i=0}^{m} \left( z_1^*A_{i1}y_1 - z_1^*A_{i2}\overline{y}_2 -z_2^* 
\overline{A}_{i2}y_1 -z_2^*\overline{A}_{i1}\overline{y}_2 \right) \mu^i \neq 0.
\end{equation} 

\noindent
Define $z := z_1 -\overline{z}_2j \in \mathbb{H}^n \setminus \{0\}$.  Then, 
\begin{align}
\sum_{i=0}^{m} z^* A_i y \mu^i & = 
\sum_{i=0}^{m} (z_1^* + z_2^* j) (A_{i1} + A_{i2}j) (y_1 + y_2 j) \mu^i \notag \\
&= \sum_{i=0}^{m} \left( z_1^*A_{i1}y_1 - z_1^*A_{i2}\overline{y}_2 -z_2^* \overline{A}_{i2}y_1 
-z_2^*\overline{A}_{i1}\overline{y}_2 \right) \mu^i \notag \\
&\quad + 
\left( \sum_{i=0}^{m} \left( z_1^*A_{i1}y_2 + z_1^*A_{i2}\overline{y}_1 -z_2^* 
\overline{A}_{i2}y_2 +z_2^*\overline{A}_{i1}\overline{y}_1 \right) 
\overline{\mu}^i \right) j \label{Eqn-3.14}.
\end{align}

\noindent
From Equations \eqref{Eqn-3.13} and \eqref{Eqn-3.14}, we get 
$\displaystyle \sum_{i=0}^{m} z^* A_i y \mu^i \neq 0$. This proves that $P(\lambda)$ 
is stable with respect to $\Omega \cap \mathbb{C}$. 
\end{proof}

\medskip
\noindent
The following examples illustrate that the reverse implication is not true in Proposition 
\ref{Prop-implications}.

\medskip
\begin{example}  \label{ex-1}
The numerical range of a matrix $A$ and the numerical range of a linear matrix 
polynomial $P(\lambda) = I\lambda - A$ coincide. Therefore, condition (a) may not 
necessarily be equivalent to conditions (b) and (c). This is supported by the following 
example. Let $P(\lambda) =I\lambda-A$, where 
$A = \begin{bmatrix}
1 & 0 \\
0 & 0
\end{bmatrix}$. It is easy to verify that the numerical range of $P(\lambda)$ is the 
interval $[0,1]$ on the real line. Note that from Theorem $5.4$ of \cite{Zhang}, the only 
right eigenvalues of $P(\lambda)$ are $0$ and $1$. Therefore $P(\lambda)$ is stable 
with respect to $\Omega =\mathbb{H} \setminus \{0,1\}$. We now show that 
$P(\lambda)$ is also hyperstable with respect to $\Omega$. Let 
$y = \begin{bmatrix}
y_1 \\
y_2
\end{bmatrix} \in \mathbb{H}^2 \setminus \{0\}$. For $y_2 \neq 0$, consider 
$z = \begin{bmatrix}
0 \\
1
\end{bmatrix}$. Then for any $\mu \in \Omega$,  

\begin{equation}
\begin{bmatrix}
0 & 1 
\end{bmatrix} \begin{bmatrix}
1 & 0\\
0 & 1
\end{bmatrix} \begin{bmatrix}
y_1 \\
y_2
\end{bmatrix} \mu - \begin{bmatrix}
0 & 1 
\end{bmatrix} \begin{bmatrix}
1 & 0\\
0 & 0
\end{bmatrix} \begin{bmatrix}
y_1 \\
y_2
\end{bmatrix}= y_2 \mu \neq 0. 
\end{equation} 

\noindent
If $y_2=0$, then $y_1 \neq 0$. In this case, take 
$z = \begin{bmatrix}
1 \\
0
\end{bmatrix}$. Then for any $\mu \in \Omega$,
\begin{equation}
\begin{bmatrix}
1 & 0 
\end{bmatrix} \begin{bmatrix}
1 & 0\\
0 & 1
\end{bmatrix} \begin{bmatrix}
y_1 \\
y_2
\end{bmatrix} \mu - \begin{bmatrix}
1 & 0 
\end{bmatrix} \begin{bmatrix}
1 & 0\\
0 & 0
\end{bmatrix} \begin{bmatrix}
y_1 \\
y_2
\end{bmatrix}= y_1 (\mu +1) \mu \neq 0.  
\end{equation} 

\noindent
Thus $P(\lambda)$ is hyperstable with respect to $\Omega$.  Note that 
$W(P) \cap \Omega \neq \emptyset$.
\end{example}

\medskip
\begin{example} \label{ex-2}
This example shows that stability need not imply hyperstability. 
Let $P\colon \mathbb{H} \rightarrow M_2(\mathbb{H})$ be defined by 
$P(\lambda) = \begin{bmatrix}
1 & \lambda \\
\lambda & \lambda^2+1
\end{bmatrix}$. 
Let $y = \begin{bmatrix}
y_1 \\
y_2
\end{bmatrix} \in \mathbb{H}^2$. If  
$ \begin{bmatrix}
0 & 0 \\
0 & 1
\end{bmatrix} \begin{bmatrix}
y_1 \\
y_2
\end{bmatrix} \lambda^2 +
\begin{bmatrix}
0 & 1 \\
1 & 0
\end{bmatrix} \begin{bmatrix}
y_1 \\
y_2
\end{bmatrix} \lambda + 
\begin{bmatrix}
1 & 0 \\
0 & 1
\end{bmatrix} \begin{bmatrix}
y_1 \\
y_2
\end{bmatrix} = \begin{bmatrix}
y_2 \lambda + y_1 \\
y_2 \lambda^2 + y_1 \lambda + y_2
\end{bmatrix} = \begin{bmatrix}
0 \\
0
\end{bmatrix}$, then $y_2 \lambda + y_1  =0$ and 
$y_2 \lambda^2 + y_1 \lambda +y_2=0$. This gives $y_1=0=y_2$ and so $y$ is the 
zero vector. Therefore, no quaternion is an eigenvalue of $P(\lambda)$ and it is stable 
with respect to any subset $\Omega \subseteq \mathbb{H}$. 
Let $\Omega = \{q \in \mathbb{H}: |q| \leq 1\}$ and 
$\Omega^{\prime} = \{ c \in \mathbb{C}$ : $|c| \leq 1 \}$. Note that 
$\Omega \cap \mathbb{C} =\Omega^{\prime}$. We show that the complex adjoint 
matrix polynomial $P_\chi(\lambda)$ is not hyperstable with respect to 
$\Omega^{\prime}$. Note that 
$P_\chi(\lambda) = \begin{bmatrix}
0 & 0 & 0 & 0 \\
0 & 1 & 0 & 0 \\
0 & 0 & 0 & 0 \\
0 & 0 & 0 & 1 
\end{bmatrix}\lambda^2 + \begin{bmatrix}
0 & 1 & 0 & 0 \\
1 & 0 & 0 & 0 \\
0 & 0 & 0 & 1 \\
0 & 0 & 1 & 0 
\end{bmatrix}\lambda + \begin{bmatrix}
1 & 0 & 0 & 0 \\
0 & 1 & 0 & 0 \\
0 & 0 & 1 & 0 \\
0 & 0 & 0 & 1 
\end{bmatrix}$. If $y = \begin{bmatrix}
0 & 0 & 0 & 1
\end{bmatrix}^T$, then for any nonzero vector $z =\begin{bmatrix}
z_1 &z_2 & z_3 & z_4
\end{bmatrix}^T \in \mathbb{C}^4$, $z^*P_\chi(\lambda)y$ can be computed as  $\overline{z}_3\lambda 
+\overline{z}_4 (\lambda^2 +1)$. We then have the following 
cases:\\
(i) If $z_3 =z_4 =0$, then for any $\lambda_0 \in \Omega^{\prime}$, 
$z^*P_\chi(\lambda_0)y=0$. \\
(ii) If $z_3 \neq 0$, $z_4 =0$, then for $\lambda_0=0 \in \Omega'$, 
$z^*P_\chi(\lambda_0)y=0$.\\
(iii) If $z_3 = 0$, $z_4 \neq 0$, then for $\lambda_0=i \in \Omega'$, 
$z^*P_\chi(\lambda_0)y=0$. \\
(iv) If $z_3 \neq 0$, $z_4 \neq 0$, then $z^*P_\chi(\lambda)y= 
\overline{z}_3\lambda +\overline{z}_4(\lambda^2 +1) = \overline{z}_4 \lambda^2+ 
\overline{z}_3 \lambda + \overline{z}_4$, a quadratic polynomial with complex 
coefficients. Since the product of its roots is equal to 
$\displaystyle \frac{\overline{z}_4}{\overline{z}_4} =1$, 
one of the roots has to lie inside $\Omega^{\prime}$. Therefore, with respect to $\Omega^{\prime}$, $P_{\chi}(\lambda)$ is not hyperstable. From Theorem 
\ref{Thm-relation bw matrix polynomial and its complex adjoint}, $P(\lambda)$ is not hyperstable with respect to $\Omega^{\prime},$ which in turn implies that $P(\lambda)$ 
is not hyperstable with respect to $\Omega$. 
\end{example}

\medskip
\noindent
In the result that follows, we simplify stability analysis of a quaternion matrix polynomial 
by reducing verification of quaternion stability to a subset of complex numbers. 
In particular we prove that in order to check for stability of a quaternion matrix 
polynomial with respect to an open (or closed) ball in the set of quaternions, that is 
centered at any complex number, it is sufficient to check its stability with respect 
to a smaller set contained in the set of complex numbers. We begin with a few notations. 
Let $B(p;r)= \{q \in \mathbb{H} : |q-p|<r \}$ represent the open ball in $\mathbb{H}$ centered at $p \in \mathbb{H}$ and radius $r >0$ and let 
$\overline{B(p;r)}= \{q \in \mathbb{H} : |q-p| \leq r \}$ be its closure. We prove this 
result for open balls and the proof follows verbatim for 
closed balls. We make use of the following lemma.

\medskip
\begin{lemma}\label{Lem-stability over ball}
Let $P(\lambda)$ be as in \eqref{eq-matrix polynomial} and $B\left(a; r \right)$ be an 
open ball in $\mathbb{H}$ centered at $a \in \mathbb{C}$ of radius $r >0$. Then 
$P(\lambda)$ is stable with respect to $B (a; r) \cap \mathbb{C}$ if and only if 
$P(\lambda)$ is stable with respect to $B (\overline{a}; r) \cap \mathbb{C}$. 
\end{lemma}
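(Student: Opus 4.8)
The plan is to reduce the statement to the location of right eigenvalues and then exploit the fact that complex right eigenvalues of a quaternion matrix polynomial occur in conjugate pairs. By Proposition \ref{Prop-stability and eigenvalues}, $P(\lambda)$ is stable with respect to a set if and only if it has no right eigenvalue in that set, so it suffices to show that $P(\lambda)$ has no eigenvalue in $B(a;r)\cap\mathbb{C}$ if and only if it has no eigenvalue in $B(\overline{a};r)\cap\mathbb{C}$. Since $a\in\mathbb{C}$, both of these sets are ordinary complex disks: for $c\in\mathbb{C}$ the quaternion modulus $|c-a|$ agrees with the complex one, so $B(a;r)\cap\mathbb{C}=\{c\in\mathbb{C}:|c-a|<r\}$ and similarly for $\overline{a}$.

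The crux is the claim that, for $c\in\mathbb{C}$, the quaternion $c$ is a right eigenvalue of $P(\lambda)$ if and only if $\overline{c}$ is. I would establish this via similarity by $j$. A direct computation gives $j^{-1}cj=\overline{c}$ (using $j^{-1}=-j$ and $(x+yi)$ conjugated by $j$ returns $x-yi$), and since conjugation by $j$ is a ring automorphism of $\mathbb{H}$, it follows that $\overline{c}^{\,i}=j^{-1}c^i j$ for every $i$. Hence, if $y\neq 0$ satisfies $\sum_{i=0}^m A_i y c^i=0$, then setting $z=yj$ (which is nonzero, as right multiplication by $j$ is invertible) yields
\[
\sum_{i=0}^m A_i z\,\overline{c}^{\,i}=\sum_{i=0}^m A_i (yj)(j^{-1}c^i j)=\Big(\sum_{i=0}^m A_i y c^i\Big)j=0,
\]
so $\overline{c}$ is a right eigenvalue with eigenvector $yj$. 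The converse follows by applying the same argument to $\overline{c}$, since $\overline{\overline{c}}=c$.

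Finally, I would combine this with the observation that complex conjugation $c\mapsto\overline{c}$ is a bijection of $B(a;r)\cap\mathbb{C}$ onto $B(\overline{a};r)\cap\mathbb{C}$, because $|c-a|=|\overline{c-a}|=|\overline{c}-\overline{a}|$. Chaining the two equivalences, $P(\lambda)$ has an eigenvalue $c$ in $B(a;r)\cap\mathbb{C}$ if and only if $\overline{c}$ is an eigenvalue in $B(\overline{a};r)\cap\mathbb{C}$, and the lemma then follows from Proposition \ref{Prop-stability and eigenvalues}.

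The only points requiring care are the algebraic identity $\overline{c}^{\,i}=j^{-1}c^i j$ and the bookkeeping that right multiplication by $j$ carries a nonzero eigenvector to a nonzero eigenvector; everything else is routine. I do not expect a genuine obstacle here, since the heart of the argument is simply that $c$ and $\overline{c}$ are similar quaternions (conjugate via $j$), which forces the set of complex eigenvalues to be symmetric under conjugation, while the two disks are exactly reflections of one another across the real axis.
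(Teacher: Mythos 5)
Your proof is correct and rests on the same key mechanism as the paper's: conjugation by $j$ (i.e., $jc=\overline{c}j$ for $c\in\mathbb{C}$, so right-multiplying an eigenvector by $j$ swaps $\mu$ and $\overline{\mu}$). The only cosmetic difference is that you first invoke Proposition \ref{Prop-stability and eigenvalues} to restate stability as the absence of eigenvalues and then argue that complex eigenvalues come in conjugate pairs, whereas the paper manipulates the pairing $\sum_{i} z^\ast A_i y\,\mu^i$ directly using the test vector $w=-yj$; both routes are sound.
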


\begin{proof}
Suppose $P(\lambda)$ is stable with respect to $B (a; r) \cap \mathbb{C}$. 
Take a nonzero vector $y \in \mathbb{H}^n$ and let $\mu \in B (\overline{a}; r) \cap \mathbb{C}$ so 
that $|\mu -\overline{a}| <r$ . Since $|\overline{\mu} -a|= |\mu -\overline{a}|$ we have $\overline{\mu} 
\in B (a; r) \cap \mathbb{C}$. Therefore,  for 
the nonzero vector $w: = -yj \in \mathbb{H}^n$ and 
$\overline{\mu} \in B \left(a; r \right) \cap \mathbb{C},$ there exists a nonzero vector 
$z \in \mathbb{H}^n$ such that
$z^*A_mw \overline{\mu}^m + \cdots + z^*A_1w \overline{\mu} +z^*A_0w= z^*A_m(-yj)\overline{\mu}^m  
+ \cdots + z^*A_1(-yj) \overline{\mu} +z^*A_0(-yj) \neq 0$. 
Post-multiplying the above identity by $j$ we have
\begin{equation*}
z^*A_m(-yj)\overline{\mu}^mj  + \cdots + z^*A_1(-yj) \overline{\mu}j + 
z^*A_0(-yj)j \neq 0. 
\end{equation*}
Since $jq = \overline{q}j$ and $j^2 =-1$ we get 
\begin{equation*}
z^*A_my\mu^m  + \cdots + z^*A_1y \mu +z^*A_0y \neq 0.
\end{equation*}
This proves that $P(\lambda)$ is stable with respect to 
$B\left(\overline{a}; r \right) \cap \mathbb{C}$. The converse follows from a similar argument.
\end{proof}

\noindent
We are now in a position to prove the aforesaid theorem.

\medskip
\begin{theorem}\label{Thm-stability on ball}
Let $P(\lambda)$ be as in \eqref{eq-matrix polynomial} and $\Omega = B (a; r)$ be an 
open ball in $\mathbb{H}$ centered at $a \in \mathbb{C}$ of radius $r >0$. Then 
$P(\lambda)$ is stable with respect to $\Omega$ if and only if  $P(\lambda)$ is stable 
with respect to $\Omega \cap \mathbb{C}$.
\end{theorem}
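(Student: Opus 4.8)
The plan is to convert the statement into one about the location of right eigenvalues using Proposition \ref{Prop-stability and eigenvalues}, and then to exploit the fact that right eigenvalues of a quaternion matrix polynomial occur in full similarity classes. One direction is immediate: since $\Omega \cap \mathbb{C} \subseteq \Omega$, stability with respect to $\Omega$ forces stability with respect to $\Omega \cap \mathbb{C}$ at once. All the content lies in the converse, so I would assume $P(\lambda)$ is stable with respect to $\Omega \cap \mathbb{C} = B(a;r)\cap\mathbb{C}$ and suppose, toward a contradiction, that $P(\lambda)$ has a right eigenvalue $\lambda_0 \in B(a;r)$.

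The key structural fact I would record first is that right eigenvalues of $P(\lambda)$ are invariant under similarity. Indeed, if $\sum_{i=0}^{m} A_i y \lambda_0^i = 0$ with $y \neq 0$ and $\mu = s^{-1}\lambda_0 s$ for some nonzero $s \in \mathbb{H}$, then $\mu^i = s^{-1}\lambda_0^i s$, so with $w := ys \neq 0$ we get $\sum_{i=0}^{m} A_i w \mu^i = \left(\sum_{i=0}^{m} A_i y \lambda_0^i\right)s = 0$. Hence the entire similarity class of $\lambda_0$, namely the sphere of quaternions sharing the real part of $\lambda_0$ and the imaginary norm $\rho := \sqrt{x_1^2+x_2^2+x_3^2}$ (writing $\lambda_0 = x_0 + x_1 i + x_2 j + x_3 k$), consists of right eigenvalues of $P(\lambda)$. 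In particular the standard complex representative $c := x_0 + \rho i$ is a right eigenvalue.

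The geometric core is to show that $c$ falls inside $B(a;r)$ or inside $B(\overline{a};r)$, whereupon Lemma \ref{Lem-stability over ball} closes the argument. Writing $a = p + qi$ with $p,q \in \mathbb{R}$, a direct computation gives
\[
|\lambda_0 - a|^2 - |c - a|^2 = (x_1-q)^2 + x_2^2 + x_3^2 - (\rho-q)^2 = 2q(\rho - x_1),
\]
and since $\rho \geq |x_1|$ the right-hand side is nonnegative whenever $q \geq 0$, so $|c-a| \leq |\lambda_0 - a| < r$ and $c \in B(a;r)\cap\mathbb{C}$. The parallel computation against $\overline{a} = p - qi$ yields $|\lambda_0 - a|^2 - |c - \overline{a}|^2 = -2q(\rho + x_1) \geq 0$ when $q \leq 0$, placing $c \in B(\overline{a};r)\cap\mathbb{C}$. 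Either way $c$ is a complex right eigenvalue lying in $B(a;r)\cap\mathbb{C}$ or $B(\overline{a};r)\cap\mathbb{C}$; but stability with respect to $B(a;r)\cap\mathbb{C}$, combined with Lemma \ref{Lem-stability over ball}, gives stability with respect to both of these sets, which by Proposition \ref{Prop-stability and eigenvalues} forbids any eigenvalue in either—a contradiction.

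I expect the only genuine obstacle to be the bookkeeping around the sign of $\operatorname{Im}(a) = q$: the standard representative $c$ has nonnegative imaginary part and so is guaranteed to land inside $B(a;r)$ only when $q \geq 0$, and inside $B(\overline{a};r)$ otherwise. This is exactly the asymmetry that Lemma \ref{Lem-stability over ball} is designed to absorb, and invoking it at the end is what collapses the two cases into a single conclusion. The closed-ball version then follows verbatim, replacing each strict inequality $< r$ by $\leq r$ throughout.
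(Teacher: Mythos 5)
Your proposal is correct and follows essentially the same route as the paper: reduce to eigenvalue location via Proposition \ref{Prop-stability and eigenvalues}, pass to the standard complex representative of the similarity class of the offending eigenvalue, show it lands in $B(a;r)$ when $\operatorname{Im}(a)\geq 0$ and in $B(\overline{a};r)$ otherwise, and absorb the second case with Lemma \ref{Lem-stability over ball}. The only difference is that you verify the key inequality $|c-a|\leq|\lambda_0-a|$ by direct computation, whereas the paper cites Lemma $3.3$ of \cite{Ahmad-Ali-Ivan}; your version is self-contained but otherwise identical in structure.
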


\begin{proof}
Assume that $P(\lambda)$ is stable with respect to $\Omega \cap \mathbb{C}$. 
Suppose $P(\lambda)$ is not stable with respect to 
$\Omega$. We first consider the case when $a$ is a complex number with 
nonnegative imaginary part. Then $P(\lambda)$ has an eigenvalue 
$\mu \in \Omega$ which is not a complex number. Since every eigenvalue 
lies in one of the equivalence classes of standard eigenvalues, $\mu$ is similar to a 
standard eigenvalue $\mu_0 \in \mathbb{C}$ of $P(\lambda)$. Let $q$ be a nonzero quaternion such that $\mu = q^{-1}\mu_0 q$. 
By Lemma $3.3$ of \cite{Ahmad-Ali-Ivan}, we have 
$|\mu_0 -a| \leq |q^{-1}\mu_0 q - a |= |\mu -a|$. Since $|\mu -a| < r$, 
we get $|\mu_0 -a| <r$, a contradiction to the fact that $P(\lambda)$ is stable with
respect to $\Omega \cap \mathbb{C}$. This proves that $P(\lambda)$ is stable with 
respect to $ \Omega$ and the first step is complete. 

If $a$ is a complex number whose imaginary part is negative, then $\overline{a}$ 
is a complex number with positive imaginary part. By Theorem 
$2.1 (5)$ of \cite{Zhang}, $a$ and $\overline{a}$ are similar, and hence 
$p^{-1}\overline{a}p = a$ for some $p \in \mathbb{H} \setminus \{0\}$. Once again, 
by Lemma $3.3$ of \cite{Ahmad-Ali-Ivan}, we have 
\begin{equation}
|\mu_0 -\overline{a}| \leq |q^{-1}\mu_0 q - p^{-1} \overline{a} p| = |\mu - a| <r.  
\end{equation} 
We thus get $\mu_0 \in B \left(\overline{a}; r \right) \cap \mathbb{C}$. This shows 
that $P(\lambda)$ is not stable with respect to 
$B \left(\overline{a}; r \right) \cap \mathbb{C}$.  From 
Lemma \ref{Lem-stability over ball}, we conclude that $P(\lambda)$ is not stable with 
respect to $\Omega \cap \mathbb{C}$ as well. Therefore for any $a \in \mathbb{C}$, $P(\lambda)$ 
is stable with respect to $B \left(a; r \right)$. For the converse, it is clear 
that if $P(\lambda)$ is stable with respect to $\Omega$, then it is stable with respect to 
$\Omega \cap \mathbb{C}$, thereby completing the proof.
\end{proof}

\medskip
\noindent
Some remarks are in order.

\medskip
\begin{remark}\label{rem-1}
Theorem \ref{Thm-stability on ball} is not true if 
$a \in \mathbb{H} \setminus \mathbb{C}$, as the following example illustrates. Let $P(\lambda) = \begin{bmatrix}
1 & 0 \\
0 & 1
\end{bmatrix} \lambda + \begin{bmatrix}
j & 0 \\
0 & j
\end{bmatrix}$. It is easy to verify that the set of eigenvalues of $P(\lambda)$ is 
$\sigma(P) = \{ \mu \in \mathbb{H} : \mu =s^{-1}js \ $\text{for some nonzero 
$s \in \mathbb{H}$}$ \}$. Let $\Omega = B \left(j;1 \right)$. Notice that $P(\lambda)$ 
is not stable with respect to $\Omega$ as $j \in \Omega$. From Theorem $2.2$ of 
\cite{Zhang}, we have $\sigma(P) \cap \mathbb{C} = \{-i, i\}$. Therefore, $-i$ and $i$ 
are the only complex eigenvalues of $P(\lambda)$. Since $-i, i \notin \Omega \cap \mathbb{C}$, $P(\lambda)$ is stable with respect to $\Omega \cap \mathbb{C}$. 
This example also suggests that Theorem \ref{Thm-stability on ball} does not hold for 
an arbitrary convex set. 
\end{remark}

\subsection{Stable sets for arbitrary quaternion matrix polynomials}\hspace*{\fill}\label{subsec-4.2}

In this section, we make use of Theorems \ref{Thm-relation bw matrix polynomial and 
its complex adjoint} and \ref{Thm-stability on ball} to obtain particular sets 
with respect to which quaternion matrix polynomials are stable. The main results 
(Theorems \ref{Thm-lower bound for eigenvalue}, \ref{Thm-upper bound for eigenvalue} 
and \ref{H-T thm}) are analogous to Lemma $3.1$ of \cite{Higham-Tisseur}, due to 
Higham and Tisseur for complex matrix polynomials. We first prove that a quaternion 
matrix polynomial is stable with respect to a particular open ball centered at zero.
For $A \in M_n(\mathbb{H})$, the spectral norm is denoted by $||A||$ and is defined 
as $||A|| = \displaystyle \sup_{x \neq 0} \Bigg\{\frac{||Ax||_2}{||x||_2}: x \in 
\mathbb{H}^n \Bigg\}$. Note that $||\bigchi_A|| = ||A||$.

\medskip
\begin{theorem}\label{Thm-lower bound for eigenvalue}
Let $P(\lambda)$ be as in \eqref{eq-matrix polynomial} with $A_0$ invertible. 
Let $l(z) = ||A_m|| z^m + \cdots + ||A_1||z - ||A_0^{-1}||^{-1}$ be a complex polynomial. 
Then $P(\lambda)$ is stable with respect to 
$\Omega = B \left(0; r \right) \subseteq \mathbb{H}$, where $r$ is the unique positive 
real zero of $l(z)$. 
\end{theorem}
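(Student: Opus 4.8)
The plan is to convert the stability statement about the quaternionic open ball into an eigenvalue-location statement for the complex adjoint $P_\chi(\lambda)$, and then to prove a Higham--Tisseur-type lower bound on the moduli of the eigenvalues of $P_\chi(\lambda)$. Since the centre $a=0$ lies in $\mathbb{C}$, Theorem \ref{Thm-stability on ball} applies and reduces the assertion to showing that $P(\lambda)$ is stable with respect to $\Omega \cap \mathbb{C} = B(0;r) \cap \mathbb{C}$. By Theorem \ref{Thm-relation bw matrix polynomial and its complex adjoint}(i) this is equivalent to the stability of $P_\chi(\lambda) = \sum_{i=0}^{m} \bigchi_{A_i}\lambda^i$ with respect to $B(0;r)\cap\mathbb{C}$. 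Finally, by the complex analogue of Proposition \ref{Prop-stability and eigenvalues} (the remark following Definition \ref{Def-stability-complex}, whose proof applies verbatim to $P_\chi$), stability of $P_\chi$ with respect to $B(0;r)\cap\mathbb{C}=\{c\in\mathbb{C}:|c|<r\}$ is equivalent to $P_\chi(\lambda)$ having no eigenvalue in the open disc. Thus it suffices to prove that every eigenvalue $\lambda_0$ of $P_\chi(\lambda)$ satisfies $|\lambda_0|\geq r$.

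Before that estimate I would record that $r$ is well defined. Since $A_m\neq 0$ we have $||A_m||>0$, so $l$ has positive leading coefficient and degree $m$; moreover $l(0)=-||A_0^{-1}||^{-1}<0$ (as $A_0$ is invertible), and $l'(z)=\sum_{i=1}^{m} i\,||A_i||\,z^{i-1}>0$ for $z>0$, so $l$ is strictly increasing on $[0,\infty)$ with $l(z)\to\infty$. Hence $l$ has a unique positive zero $r$, with $l(z)<0$ exactly on $[0,r)$. For the bound, let $\lambda_0$ be an eigenvalue of $P_\chi$ with eigenvector $x\neq 0$, so that $\bigchi_{A_0}x=-\sum_{i=1}^{m}\bigchi_{A_i}\lambda_0^i x$. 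Since $A\mapsto\bigchi_A$ is an injective ring homomorphism, $\bigchi_{A_0}$ is invertible with $\bigchi_{A_0}^{-1}=\bigchi_{A_0^{-1}}$, whence $||\bigchi_{A_0}^{-1}||=||A_0^{-1}||$; combining this with $||\bigchi_{A_i}||=||A_i||$ and $|\lambda_0^i|=|\lambda_0|^i$ gives
\[
||x|| = ||\bigchi_{A_0}^{-1}\bigchi_{A_0}x|| \leq ||A_0^{-1}||\,\Big\|\sum_{i=1}^{m}\bigchi_{A_i}\lambda_0^i x\Big\| \leq ||A_0^{-1}||\left(\sum_{i=1}^{m}||A_i||\,|\lambda_0|^i\right)||x||.
\]
Dividing by $||x||>0$ yields $||A_0^{-1}||^{-1}\leq \sum_{i=1}^{m}||A_i||\,|\lambda_0|^i$, that is, $l(|\lambda_0|)\geq 0$. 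As $l<0$ on $[0,r)$, this forces $|\lambda_0|\geq r$, which is exactly what was needed.

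The routine portions are the elementary analysis of $l$ (the sign change at $0$ and strict monotonicity) and the bookkeeping with the complex adjoint via submultiplicativity. The step deserving the most care is the norm estimate producing $l(|\lambda_0|)\geq 0$: one must isolate the constant term $\bigchi_{A_0}x$, exploit $\bigchi_{A_0}^{-1}=\bigchi_{A_0^{-1}}$ together with $||\bigchi_{A_i}||=||A_i||$, and observe that the homogeneity $|\lambda_0^i|=|\lambda_0|^i$ makes the terms assemble precisely into $l$. I note that an essentially identical argument bypasses $P_\chi$ entirely: by Proposition \ref{Prop-stability and eigenvalues} it is enough to bound the right eigenvalues of $P(\lambda)$, and applying the same estimate to $\sum_{i=0}^{m}A_i y\lambda_0^i=0$ (using $||y\lambda_0^i||=||y||\,|\lambda_0|^i$) again gives $l(|\lambda_0|)\geq 0$; I would nonetheless present the complex-adjoint version so as to display the result as a genuine application of Theorems \ref{Thm-relation bw matrix polynomial and its complex adjoint} and \ref{Thm-stability on ball}.
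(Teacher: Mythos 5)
Your proposal is correct and follows essentially the same route as the paper: reduce via Theorem \ref{Thm-stability on ball} to the complex slice $B(0;r)\cap\mathbb{C}$, pass to $P_\chi(\lambda)$ via Theorem \ref{Thm-relation bw matrix polynomial and its complex adjoint}, and invoke the Higham--Tisseur lower bound on eigenvalue moduli. The only difference is that the paper simply cites Lemma $3.1$ of \cite{Higham-Tisseur} for that bound (using $\|\bigchi_{A}\|=\|A\|$ and $\bigchi_{A_0}^{-1}=\bigchi_{A_0^{-1}}$), whereas you reprove it with the norm estimate $l(|\lambda_0|)\geq 0$; your closing observation that the same estimate applied directly to $\sum_i A_i y\lambda_0^i=0$ bypasses $P_\chi$ altogether is a valid and slightly more elementary shortcut, though it would not showcase the two theorems as the paper intends.
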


\begin{proof}
Consider the complex adjoint matrix polynomial of $P(\lambda)$ given by 
$P_{\chi} (\lambda) = \bigchi_{A_m} \lambda^m + \cdots + \bigchi_{A_1} \lambda + \bigchi_{A_0}$. Since $A_0$ is invertible, it follows from Theorem $4.2(5)$ 
of \cite{Zhang} that $\bigchi_{A_0}$ is invertible and 
$\bigchi_{A_0}^{-1} = \bigchi_{A_0^{-1}}$. Therefore, by Lemma $3.1$ of \cite{Higham-Tisseur}, $P_{\chi} (\lambda)$ is stable with respect to 
$ D= \{ z \in \mathbb{C} : |z| < r \}$, where $r$ is the unique positive real root of the 
complex polynomial 
$||\bigchi_{A_m}||z^m + \cdots + ||\bigchi_{A_1}||z - ||\bigchi _ {A_0}^{-1}||^{-1} = ||A_m|| z^m + 
 \cdots + ||A_1||z - ||A_0^{-1}||^{-1} = l(z)$. This proves stability of 
 $P_{\chi}(\lambda)$ with  respect to $\Omega \cap \mathbb{C}$. From Theorem 
\ref{Thm-relation bw matrix polynomial and its complex adjoint}, we conclude that  
$P(\lambda)$ is stable with respect to $\Omega \cap \mathbb{C}$. Finally, 
Theorem \ref{Thm-stability on ball}, yields that $P(\lambda)$ is stable with respect to $\Omega$.
\end{proof}

\medskip
\noindent
We now proceed to prove in Theorem \ref{Thm-upper bound for eigenvalue}, that an 
arbitrary quaternion matrix polynomial is stable with respect to the complement of a 
closed ball centered at zero in $\mathbb{H}$. The following set of lemmas set the 
tone for this. We prove them in the order of preference.

\medskip
\begin{lemma}\label{Lem-zero eigenvalue}
Let $P(\lambda)$ be as in \eqref{eq-matrix polynomial}. Then zero is an eigenvalue of 
$P(\lambda)$ if and only if $A_0$ is non-invertible. 
\end{lemma}

\begin{proof}
Suppose zero is an eigenvalue of $P(\lambda)$ and $y \in \mathbb{H}^n$ is a 
nonzero vector such that $A_m y 0^m + \cdots + A_1 y 0 +A_0 y = 0$. Thus $A_0 y =0$, 
thereby proving that $A_0$ is not invertible. Conversely, if $A_0$ is not invertible, then 
$\bigchi_{A_0}$ is not invertible. Therefore, zero is an eigenvalue of $\bigchi_{A_0}$, 
which in turn implies that zero is an eigenvalue of $A_0$.  We thus have $A_0 y = 0$ 
for some nonzero vector $y \in \mathbb{H}^n$ so that $A_m y 0^m + 
\cdots + A_1 y 0 + A_0y = 0$. This proves that zero is an eigenvalue of $P(\lambda)$.
\end{proof}

\medskip

\begin{lemma}\label{Lem-inverse matrix polynomial}
Let $P(\lambda)$ be as in \eqref{eq-matrix polynomial} with $A_m$ and $A_0$ 
invertible. Then 
$\lambda_0 \in \mathbb{H}$ is an eigenvalue of $P(\lambda)$ if and only if $\frac{1}{\lambda_0}$ is an eigenvalue of 
$Q(\lambda) = P \left(\frac{1}{\lambda} \right)\lambda^m = A_0 \lambda^m + 
A_1 \lambda^{m-1} + \cdots + A_m$.
\end{lemma}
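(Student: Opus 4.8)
The plan is to derive the eigenvalue equation for $Q(\lambda)$ directly from that of $P(\lambda)$ by right-multiplication, exploiting the fact that although $\mathbb{H}$ is noncommutative, all integer powers of a \emph{single} quaternion commute with one another. Before doing so, I would record that any eigenvalue $\lambda_0$ must be nonzero: since $A_0$ is invertible, Lemma \ref{Lem-zero eigenvalue} guarantees that zero is not an eigenvalue of $P(\lambda)$, so $\lambda_0^{-1}$ is well-defined.

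For the forward implication, suppose $\lambda_0$ is an eigenvalue of $P(\lambda)$ with eigenvector $y \in \mathbb{H}^n \setminus \{0\}$, so that $\sum_{i=0}^m A_i y \lambda_0^i = 0$. I would right-multiply this identity by $\lambda_0^{-m}$. The key point is that $\lambda_0^{-1}$ commutes with $\lambda_0$ (indeed $\lambda_0^{-1} = \overline{\lambda_0}/|\lambda_0|^2$ lies in the commutative real algebra generated by $\lambda_0$), so that $\lambda_0^i \lambda_0^{-m} = \lambda_0^{i-m} = (\lambda_0^{-1})^{m-i}$. The equation then becomes $\sum_{i=0}^m A_i y (\lambda_0^{-1})^{m-i} = 0$, which is precisely the statement that $\lambda_0^{-1}$ is a right eigenvalue of $Q(\lambda) = \sum_{i=0}^m A_i \lambda^{m-i}$, with the \emph{same} eigenvector $y$.

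For the converse I would observe that the reversal construction is an involution: $Q(\lambda)$ has invertible leading coefficient $A_m$ and invertible constant coefficient $A_0$, and a direct computation gives $Q\!\left(\tfrac{1}{\lambda}\right)\lambda^m = P(\lambda)$. Hence applying the forward implication to $Q(\lambda)$ shows that whenever $\mu_0$ is an eigenvalue of $Q(\lambda)$, $\mu_0^{-1}$ is an eigenvalue of $P(\lambda)$, which supplies the reverse direction without any new work.

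The only step demanding genuine care is the noncommutativity of $\mathbb{H}$: in the right eigenvalue formulation the scalars sit on the right of $y$, and the right-multiplication trick is legitimate \emph{solely} because the powers of the single quaternion $\lambda_0$ commute among themselves. This commutativity is the crux of the argument and the step I would state explicitly; the remaining reindexing ($i \mapsto m-i$ to match the coefficients of $Q$) is routine.
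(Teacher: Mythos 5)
Your proof is correct and follows essentially the same route as the paper's: establish $\lambda_0\neq 0$ via Lemma \ref{Lem-zero eigenvalue}, post-multiply the eigenvalue identity by $\lambda_0^{-m}$ (valid since powers of a single quaternion commute), and obtain the converse by symmetry of the reversal. (Minor slip only: the leading coefficient of $Q$ is $A_0$ and its constant term is $A_m$, not the other way around; both are invertible, so nothing changes.)
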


\begin{proof}
Let $\lambda_0 \in \mathbb{H}$ be an eigenvalue of $P(\lambda)$. By Lemma 
\ref{Lem-zero eigenvalue}, $\lambda_0 \neq 0$. Then, 
$A_m y \lambda_0^m +\cdots + A_1 y \lambda_0 + A_0 y =0$ for some 
$y \in \mathbb{H}^n \setminus \{0\}$. Post-multiplying by 
$\frac{1}{\lambda_0^m}$ on both the sides, we get 
$A_0 y \frac{1}{\lambda_0^m} + A_1 y \frac{1}{\lambda_0^{m-1}}+ 
\cdots + A_my = 0$. This implies $\frac{1}{\lambda_0}$ is an eigenvalue of 
$Q(\lambda)$. The converse follows in a similar argument.
\end{proof}

\medskip

\begin{lemma}\label{Lem-stability on complement of ball}
Let $P(\lambda)$ be as in \eqref{eq-matrix polynomial} with $A_m, A_0$ invertible 
and let $\Omega = \overline{B \left(0;r \right)} \subseteq \mathbb{H}$ with $r > 0$. 
If $P(\lambda)$ is stable with respect to 
$\left(\mathbb{H} \setminus \Omega \right) \cap \mathbb{C}$, then $P(\lambda)$ is 
stable with respect to $\mathbb{H} \setminus \Omega$.
\end{lemma}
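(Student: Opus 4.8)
The plan is to reduce the statement, via Proposition \ref{Prop-stability and eigenvalues}, to a comparison of the eigenvalue sets of $P(\lambda)$ inside and outside the complex plane, and then to transport the region $\mathbb{H} \setminus \Omega = \{q \in \mathbb{H} : |q| > r\}$ to an \emph{open ball about the origin} through the reversal polynomial $Q(\lambda) = P(1/\lambda)\lambda^m = A_0 \lambda^m + A_1 \lambda^{m-1} + \cdots + A_m$ of Lemma \ref{Lem-inverse matrix polynomial}. The key observation is that inversion $q \mapsto 1/q$ carries $\{q \in \mathbb{H} : |q| > r\}$ bijectively onto $B(0; 1/r) \setminus \{0\}$, and Lemma \ref{Lem-inverse matrix polynomial} (available precisely because $A_m$ and $A_0$ are invertible) matches the eigenvalues of $P$ in the former region with the nonzero eigenvalues of $Q$ in the latter. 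Once the problem is phrased on an open ball centered at $0 \in \mathbb{C}$, Theorem \ref{Thm-stability on ball} applies to $Q$ and collapses its stability there to stability on the complex slice of that ball.

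Concretely, I would argue by contrapositive. Suppose $P(\lambda)$ is not stable with respect to $\mathbb{H} \setminus \Omega$; by Proposition \ref{Prop-stability and eigenvalues} it then has an eigenvalue $\lambda_0 \in \mathbb{H}$ with $|\lambda_0| > r$. By Lemma \ref{Lem-inverse matrix polynomial}, $1/\lambda_0$ is an eigenvalue of $Q(\lambda)$, and $|1/\lambda_0| = 1/|\lambda_0| < 1/r$, so $1/\lambda_0 \in B(0; 1/r)$; hence $Q$ is not stable with respect to the open ball $B(0; 1/r)$. Since this ball is centered at $0 \in \mathbb{C}$, Theorem \ref{Thm-stability on ball} (applied to $Q$) yields that $Q$ is not stable with respect to $B(0; 1/r) \cap \mathbb{C}$ either, so $Q$ has a complex eigenvalue $c$ with $|c| < 1/r$.

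The remaining step is to push $c$ back to a complex eigenvalue of $P$ of modulus exceeding $r$. The only delicate point is that $B(0;1/r)$ contains the origin, where inversion is undefined; this is resolved by Lemma \ref{Lem-zero eigenvalue}, since the constant term of $Q$ is $A_m$, which is invertible, so $0$ is not an eigenvalue of $Q$ and therefore $c \neq 0$. Applying Lemma \ref{Lem-inverse matrix polynomial} once more, $1/c$ is a complex eigenvalue of $P$ with $|1/c| > r$, that is, $1/c \in (\mathbb{H} \setminus \Omega) \cap \mathbb{C}$, whence $P$ is not stable with respect to $(\mathbb{H} \setminus \Omega) \cap \mathbb{C}$. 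This is exactly the negation of the hypothesis, so the contrapositive is established.

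I do not anticipate a genuine obstacle: the invertibility of $A_m$ and $A_0$ guarantees that Lemmas \ref{Lem-inverse matrix polynomial} and \ref{Lem-zero eigenvalue} are both applicable, and Theorem \ref{Thm-stability on ball} was tailored exactly to open balls centered at complex points. The care required is essentially bookkeeping — checking that $q \mapsto 1/q$ sets up the correct correspondence between $\{q : |q| > r\}$ and $B(0;1/r) \setminus \{0\}$ at both the quaternionic and the complex level, and accounting for the origin through the non-vanishing eigenvalue observation — rather than any conceptual difficulty.
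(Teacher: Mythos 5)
Your proof is correct and takes essentially the same route as the paper's: pass to the reversal $Q(\lambda) = P(1/\lambda)\lambda^m$ via Lemma \ref{Lem-inverse matrix polynomial}, apply Theorem \ref{Thm-stability on ball} to the open ball $B(0;1/r)$ centered at $0 \in \mathbb{C}$, and invert the resulting complex eigenvalue back to one of $P$. The only difference is that you explicitly rule out $c = 0$ using Lemma \ref{Lem-zero eigenvalue} and the invertibility of $A_m$ (the constant term of $Q$), a point the paper's proof leaves implicit.
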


\begin{proof}
Suppose $P(\lambda)$ is not stable with respect to $\mathbb{H} \setminus \Omega$. 
Then, there exists an eigenvalue $\mu$ of $P(\lambda)$ in  
$\mathbb{H} \setminus \Omega$. By Lemma 
\ref{Lem-inverse matrix polynomial}, $\frac{1}{\mu}$ is an eigenvalue of 
$Q(\lambda) = P\left(\frac{1}{\lambda}\right)\lambda^m$. Then $Q(\lambda)$ is not 
stable with respect to $B \left(0; \frac{1}{r} \right)$. Therefore, by Theorem 
\ref{Thm-stability on ball}, $Q(\lambda)$ is not stable with respect to $B\left(0;\frac{1}{r}\right) 
\cap \mathbb{C}$. This in turn implies there 
exists an eigenvalue $\delta \in \mathbb{C}$ of $Q(\lambda)$, with 
$|\delta| < \displaystyle \frac{1}{r}$. 
Once again by Lemma \ref{Lem-inverse matrix polynomial}, 
$\displaystyle \frac{1}{\delta}$ is an eigenvalue of $P(\lambda)$, thereby implying 
that $P(\lambda)$ is not stable with respect to 
$\left(\mathbb{H} \setminus \Omega \right) \cap \mathbb{C}$. This contradiction 
proves that $P(\lambda)$ is stable with respect to $\mathbb{H} \setminus \Omega$.
\end{proof}

\noindent
We now prove the main result stated at the beginning of this section.

\begin{theorem}\label{Thm-upper bound for eigenvalue}
Let $P(\lambda)$ be as in \eqref{eq-matrix polynomial} with $A_m$ and $A_0$ 
invertible. Let $u(z) =||A_m^{-1}||^{-1} z^m - ||A_{m-1}|| z^{m-1} - \cdots - ||A_0||$ 
be a complex polynomial. Then $P(\lambda)$ is stable with respect to 
$\left(\mathbb{H} \setminus \overline{B(0;R)} \right) \subseteq \mathbb{H}$, where 
$R$ is the unique positive real zero of $u(z)$.
\end{theorem}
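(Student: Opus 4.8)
The plan is to reduce this upper-bound statement to the lower-bound result already established in Theorem \ref{Thm-lower bound for eigenvalue}, by passing to the reversed polynomial. Set $Q(\lambda) = P(1/\lambda)\lambda^m = A_0\lambda^m + A_1\lambda^{m-1} + \cdots + A_{m-1}\lambda + A_m$, exactly as in Lemma \ref{Lem-inverse matrix polynomial}. The leading coefficient of $Q$ is $A_0$ (nonzero, since $A_0$ is invertible) and its constant term is $A_m$ (invertible by hypothesis), so $Q$ meets the hypotheses of Theorem \ref{Thm-lower bound for eigenvalue}. Applying that theorem to $Q$, I obtain that $Q$ is stable with respect to the open ball $B(0;\rho)\subseteq\mathbb{H}$, where $\rho$ is the unique positive real zero of $l_Q(z) = \|A_0\|z^m + \|A_1\|z^{m-1} + \cdots + \|A_{m-1}\|z - \|A_m^{-1}\|^{-1}$.

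The next step is to identify $\rho$ with $1/R$. A direct computation gives the reciprocal relation $u(z) = -z^m\,l_Q(1/z)$ for $z\neq 0$; indeed, multiplying $l_Q(1/z)$ by $-z^m$ turns the ascending coefficients of $l_Q$ into the descending coefficients of $u$. Consequently, for $z=R>0$ we have $u(R)=0$ if and only if $l_Q(1/R)=0$. Both polynomials have a single sign change in their coefficient sequences ($l_Q$ has the pattern $+,\dots,+,-$ and $u$ the pattern $+,-,\dots,-$ with $u(0)=-\|A_0\|<0$), so by Descartes' rule of signs each has exactly one positive real zero, as is already asserted in the two theorem statements; hence $\rho = 1/R$.

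Finally I transfer back to $P$ through the eigenvalue correspondence. Suppose, for contradiction, that $P$ is not stable with respect to $\mathbb{H}\setminus\overline{B(0;R)}$. By Proposition \ref{Prop-stability and eigenvalues} there is an eigenvalue $\mu$ of $P$ with $|\mu|>R$; by Lemma \ref{Lem-inverse matrix polynomial}, $1/\mu$ is then an eigenvalue of $Q$, and $|1/\mu| = 1/|\mu| < 1/R = \rho$, so $1/\mu\in B(0;\rho)$. This contradicts the stability of $Q$ with respect to $B(0;\rho)$ (again via Proposition \ref{Prop-stability and eigenvalues}). Hence $P$ has no eigenvalue outside $\overline{B(0;R)}$ and is stable with respect to $\mathbb{H}\setminus\overline{B(0;R)}$.

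The construction of the reversal and the verification $\rho = 1/R$ are routine; the one point that really uses the quaternionic machinery is that Theorem \ref{Thm-lower bound for eigenvalue} already delivers stability of $Q$ over the \emph{full} quaternionic ball $B(0;\rho)$, not merely its complex slice, so the contradiction is immediate from Lemma \ref{Lem-inverse matrix polynomial} with no further reduction. If one instead applied the lower-bound estimate only on $\mathbb{C}$, the role of Lemma \ref{Lem-stability on complement of ball} would be to upgrade stability on $(\mathbb{H}\setminus\overline{B(0;R)})\cap\mathbb{C}$ to stability on all of $\mathbb{H}\setminus\overline{B(0;R)}$; I expect that passage from the complex slice to all of $\mathbb{H}$ to be the only delicate point should the proof be organized that way.
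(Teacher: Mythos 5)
Your proof is correct, but it is organized differently from the paper's. The paper proves the theorem by running the complex-adjoint machinery a second time: it applies Lemma $3.1$ of Higham--Tisseur to $P_\chi(\lambda)$ to get stability of $P_\chi$ on $\{z\in\mathbb{C}:|z|>R\}$, transfers this to $P$ on the complex slice $\left(\mathbb{H}\setminus\overline{B(0;R)}\right)\cap\mathbb{C}$ via Theorem \ref{Thm-relation bw matrix polynomial and its complex adjoint}, and then upgrades to all of $\mathbb{H}\setminus\overline{B(0;R)}$ using Lemma \ref{Lem-stability on complement of ball} --- which is exactly the ``delicate point'' you flagged at the end, and which the paper handles by the same reversal $Q(\lambda)=P(1/\lambda)\lambda^m$ hidden inside that lemma. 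You instead perform the reversal up front and quote Theorem \ref{Thm-lower bound for eigenvalue} for $Q$, which already delivers stability over the \emph{full} quaternionic ball $B(0;\rho)$; the reciprocal identity $u(z)=-z^m l_Q(1/z)$ correctly identifies $\rho=1/R$, and Lemma \ref{Lem-inverse matrix polynomial} together with Proposition \ref{Prop-stability and eigenvalues} closes the contradiction. Your route is shorter and avoids invoking the ``outside the disk'' half of the Higham--Tisseur lemma and the complex-adjoint theorem a second time, at the small cost of the bookkeeping needed to match $\rho$ with $1/R$; the paper's route keeps the two bounds structurally parallel (each a direct corollary of the corresponding Higham--Tisseur statement) and isolates the slice-to-$\mathbb{H}$ upgrade in a reusable lemma. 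Both arguments use the same essential ingredients, and I see no gap in yours.
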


\begin{proof}
Consider the complex adjoint matrix polynomial 
$P_\chi(\lambda) = \bigchi_{A_m} \lambda^m + \cdots+ \bigchi_{A_1} \lambda + \bigchi_{A_0}$ of $P(\lambda)$. Invertibility of $A_m$ and $A_0$ implies 
$\bigchi_{A_m}$ and $\bigchi_{A_0}$ are invertible. Therefore, by Lemma $3.1$ of \cite{Higham-Tisseur}, $P_{\chi}(\lambda)$ is stable with respect to 
$D = \{z \in \mathbb{C}: |z| > R\}$, where $R$ is the unique positive real zero of the 
complex polynomial 
$||\bigchi_{A_m}^{-1}||^{-1} z^m - ||\bigchi_{A_{m-1}}|| z^{m-1} - \cdots - ||\bigchi_{A_0}|| = 
||A_m^{-1}||^{-1} z^m - ||A_{m-1}|| 
z^{m-1} - \cdots - ||A_0|| = u(z)$. That is, $P_\chi(\lambda)$ is stable with respect to 
$\left(\mathbb{H} \setminus \overline{B(0;R)} \right) \bigcap \mathbb{C}$. 
From Theorem \ref{Thm-relation bw matrix polynomial and its complex adjoint}, we conclude that $P(\lambda)$ is stable with respect to 
$\left(\mathbb{H} \setminus \overline{B(0;R)} \right) \bigcap \mathbb{C}$. Finally, 
the required conclusion follows from Lemma \ref{Lem-stability on complement of ball}.
\end{proof}

\medskip
We thus have the following theorem as an application of Theorems 
\ref{Thm-lower bound for eigenvalue} and \ref{Thm-upper bound for eigenvalue}. 
We state this below.
	
\begin{theorem}\label{H-T thm}
Right eigenvalues of a quaternion matrix polynomial $P(\lambda)$ with invertible leading 
and constant coefficients lie in the set $\{q \in \mathbb{H} : r \leq |q| \leq R \}$, where 
$r$ and $R$ are as given in Theorems \ref{Thm-lower bound for eigenvalue} and \ref{Thm-upper bound for eigenvalue} respectively.
\end{theorem}

\begin{remark}
In \cite{Basavaraju-Hadimani-Jayaraman-3}, 
the authors prove that if $P(\lambda)$ is a quaternion matrix polynomial with unitary 
matrix coefficients, then the eigenvalues of $P(\lambda)$ lie in the set 
$\{q \in \mathbb{H} : \frac{1}{2} \leq |q| \leq 2 \}$. This result was proved while 
studying the Hoffman-Wielandt inequality for quaternion matrix polynomials.
\end{remark}

\medskip

As the following example illustrates, both the lower and upper 
bounds $r$ and $R$ are attained.	
\begin{example}\label{ex-2.2}
In this example, both the lower and the upper bounds obtained from Theorems 
\ref{Thm-lower bound for eigenvalue} and \ref{Thm-upper bound for eigenvalue} 
coincide with the minimum and the maximum of the moduli of eigenvalues respectively. 
Let $P(\lambda)=A_2 \lambda^2 +A_1 \lambda + A_0$ where $A_2=A_0 = 
\begin{bmatrix}
		1 & 0 \\
		0 & 1 
\end{bmatrix}$ and $A_1=\begin{bmatrix}
		i & 0 \\
		0 & j
\end{bmatrix}$. It is easy to verify that the minimum and the maximum of the moduli of eigenvalues of $P(\lambda)$ are $\displaystyle \frac{-1+\sqrt{5}}{2}$ and 
$\displaystyle \frac{1+\sqrt{5}}{2}$ respectively. From Theorem 
\ref{Thm-lower bound for eigenvalue}, the lower bound on the set of moduli of eigenvalues 
is the unique positive zero of the complex polynomial 
$l(z) = ||A_2||z^2+ ||A_1|| z -||A_0^{-1}||^{-1}=z^2+z-1$ given by 
$r=\displaystyle \frac{-1+\sqrt{5}}{2}$.  Similarly, from 
Theorem \ref{Thm-upper bound for eigenvalue}, the upper bound is the unique 
positive zero of the complex polynomial 
$u(z) = ||A_2^{-1}||^{-1} z^2- ||A_1|| z -||A_0||=z^2-z-1$ given by 
$R=\displaystyle \frac{1+\sqrt{5}}{2}$.
\end{example}   

\medskip

We now bring out yet another application of the results obtained in this section. Recall that 
$A \in M_n(\mathbb{H})$ is said to be positive semidefinite (positive definite) if for every 
$x \in \mathbb{H}^n, \ x^{\ast}Ax \geq 0$ ($x^{\ast}Ax > 0$ for all $0 \neq x \in \mathbb{H}^n$). It is customary to denote positive semidefiniteness and positive 
definiteness by the notations $A \succeq 0$ and $A \succ 0$ respectively. It turns out that 
a quaternion matrix $A$ is positive (semi)definite if and only if its complex 
adjoint matrix is positive (semi)definite (see Proposition $3.4.1$, \cite{Rodman}). It 
follows from the definition that any right eigenvalue of a positive (semi)definite matrix is 
nonnegative(positive). One of the classical results on the location of the roots of complex polynomials inside the unit disc or more generally an annulus is the Enestr{\"o}m-Kakeya theorem. There is a vast literature on this result as one can infer from MathSciNet. In Theorem $2.3$ of \cite{Le-Du-Nguyen}, the authors prove a generalization of this for complex matrix polynomials. We state this below. 

\begin{theorem}\label{E-K them C}
Let $P(\lambda) = A_m \lambda^m + \dots + A_1 \lambda + A_0$ be a 
matrix polynomial whose coefficients $A_i \in M_n(\mathbb{C})$ satisfy $A_m \succeq 
A_{m-1} \succeq \dots \succeq A_0 \succeq 0; A_m \succ 0$. Then each eigenvalue 
$\lambda_0$ of $P(\lambda)$ satisfies $\frac{\lambda_{\text{min}}(A_0)}{2 \lambda_{\text{max}}(A_m)} 
\leq |\lambda_0| \leq 1$, where $\lambda_{\text{min}}(A_0)$ denotes the smallest 
eigenvalue of $A_0$ and $\lambda_{\text{max}}(A_m)$ denotes the largest eigenvalue 
of $A_m$.
\end{theorem}

We now derive an analogue of the above theorem for quaternion matrix polynomials.

\begin{theorem}\label{E-K thm Q}
Let $P(\lambda) = A_m \lambda^m + \dots + A_1 \lambda + A_0$ be an $n \times n$ 
quaternion matrix polynomial whose coefficients $A_i \in M_n(\mathbb{H})$ satisfy 
$A_m \succeq A_{m-1} \succeq \dots \succeq A_0 \succ 0$. Then each right eigenvalue 
$\lambda_0$ of $P(\lambda)$ satisfies 
$\frac{\lambda_{\text{min}}(A_0)}{2 \lambda_{\text{max}}(A_m)}
\leq |\lambda_0| \leq 1$, where $\lambda_{\text{min}}(A_0)$ denotes the smallest 
right eigenvalue of $A_0$ and $\lambda_{\text{max}}(A_m)$ denotes the largest right eigenvalue of $A_m$.	
\end{theorem}

\begin{proof}
Observe that $\frac{\lambda_{\text{min}}(A_0)}{2 \lambda_{\text{max}}(A_m)} =  \frac{\lambda_{\text{min}}(\bigchi_{A_0})}{2 \lambda_{\text{max}}(\bigchi_{A_m})}$. 
Let $\Omega_1$ and $\Omega_2$ denote respectively the open and closed balls in $\mathbb{H}$ of radii 
$\frac{\lambda_{\text{min}}(A_0)}{2 \lambda_{\text{max}}(A_m)}$ and $1$. 
Positive (semi)definiteness of the matrix coefficients (in $\mathbb{H}$) imply positive (semi)definiteness of the respective complex adjoint matrices; moreover the latter are 
ordered in the same way as the $A_i$'s. From Theorem \ref{E-K them C}, it follows that 
the complex adjoint matrix polynomial $P_{\chi}(\lambda)$ of $P(\lambda)$ is stable 
with respect to $\Omega_1 \cap \mathbb{C}$ and 
$\Big(\mathbb{H} \setminus \Omega_2\Big) \cap \mathbb{C}$. Resorting to Theorems
\ref{Thm-relation bw matrix polynomial and its complex adjoint} and 
\ref{Thm-stability on ball}, we deduce that the quaternion matrix polynomial 
$P(\lambda)$ is stable with respect to $\Omega_1$. From Proposition 
\ref{Prop-stability and eigenvalues}, 
we infer that any right eigenvalue $\lambda_0$ of $P(\lambda)$ satisfies 
$\frac{\lambda_{\text{min}}(A_0)}{2 \lambda_{\text{max}}(A_m)} \leq |\lambda_0|$. 
A similar argument yields that the quaternion matrix polynomial $P(\lambda)$ is stable 
with respect to $\mathbb{H} \setminus \Omega_2$ and therefore any right eigenvalue $\lambda_0$ of $P(\lambda)$ is also of absolute value at most $1$. This completes the 
proof of the theorem. 
\end{proof}

\subsection{Class of matrix polynomials for which stability implies hyperstability} 
\hspace*{\fill}\label{subsec-4.3}

We now proceed to identify classes of quaternion matrix polynomials for which stability 
implies hyperstability. Our first result in this direction concerns block upper triangular 
matrix polynomials, where each of the diagonal blocks are hyperstable with respect to 
a common set $\Omega$.

\begin{theorem}\label{Thm-upper triangular matrix polynomial}
Let $P(\lambda)$ be a block upper triangular quaternion matrix 
polynomial and $\Omega \subseteq \mathbb{H}$ be nonempty. If each of the diagonal 
blocks of $P(\lambda)$ are hyperstable with respect to $\Omega$, then $P(\lambda)$ is hyperstable with respect to $\Omega$.
\end{theorem}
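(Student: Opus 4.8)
The plan is to exploit the block upper triangular structure so as to reduce the hyperstability condition for $P(\lambda)$ to that of a single diagonal block, the block being dictated by the given vector $y$.

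First I would fix notation. Write $P(\lambda)$ as a $k \times k$ block upper triangular matrix polynomial with square diagonal blocks $P_{11}(\lambda), \ldots, P_{kk}(\lambda)$ of sizes $n_1, \ldots, n_k$ (so that $n = n_1 + \cdots + n_k$), and accordingly write each coefficient as $A_i = \big((A_{st})_i\big)_{s,t=1}^{k}$, which is block upper triangular in the sense that $(A_{st})_i = 0$ whenever $s > t$. For block vectors $z = (z_1, \ldots, z_k)$ and $y = (y_1, \ldots, y_k)$ with $z_s, y_s \in \mathbb{H}^{n_s}$, the quantity appearing in Definition \ref{Def-hyperstability-quaternion} decomposes, keeping every $\mu^i$ on the right to respect noncommutativity, as
\[
\sum_{i=0}^m z^* A_i y \, \mu^i = \sum_{s \le t} \sum_{i=0}^m z_s^* (A_{st})_i \, y_t \, \mu^i .
\]

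Next, given a nonzero $y \in \mathbb{H}^n$, I would let $p$ be the \emph{largest} index for which the block $y_p$ is nonzero, so that $y_{p+1} = \cdots = y_k = 0$. The crucial observation is that the $p$-th block component of $\sum_{i} A_i y \, \mu^i$ equals $\sum_{i} (A_{pp})_i \, y_p \, \mu^i$: the upper triangular structure annihilates every block entry to the left of the diagonal in block row $p$, while the surviving entries $(A_{p,p+1})_i, \ldots, (A_{pk})_i$ are multiplied by the vanishing blocks $y_{p+1}, \ldots, y_k$. Thus block row $p$ sees only the action of the diagonal block $P_{pp}$ on $y_p$. I would then invoke hyperstability of $P_{pp}$ with respect to $\Omega$: since $y_p \neq 0$, there exists a nonzero $z_p \in \mathbb{H}^{n_p}$ with $\sum_{i} z_p^* (A_{pp})_i \, y_p \, \mu^i \neq 0$ for every $\mu \in \Omega$. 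Defining $z \in \mathbb{H}^n$ to carry $z_p$ in its $p$-th block and zeros in all other blocks yields a nonzero vector, and because only the $p$-th block of $z$ survives in the pairing, I obtain
\[
\sum_{i=0}^m z^* A_i y \, \mu^i = \sum_{i=0}^m z_p^* (A_{pp})_i \, y_p \, \mu^i \neq 0 \quad \text{for all } \mu \in \Omega,
\]
which is precisely hyperstability of $P(\lambda)$.

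The step requiring the most care is the reduction isolating a single diagonal block: one must select the \emph{last} nonzero block of $y$ rather than the first, since in a block upper triangular polynomial the $p$-th block row couples $y_p$ to every later block $y_{p+1}, \ldots, y_k$; choosing the last nonzero block is exactly what forces these coupling terms to vanish and leaves only the contribution of $P_{pp}$. Apart from this, the only bookkeeping to watch is the consistent placement of the scalar powers $\mu^i$ on the right throughout the block expansion, so that the reduced expression is literally the hyperstability form for $P_{pp}$ evaluated at $y_p$. (Equivalently, one could argue by induction on the number of diagonal blocks $k$, peeling off the last block row and column, but the direct argument above avoids the extra layer of notation.)
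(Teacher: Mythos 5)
Your proposal is correct and follows essentially the same route as the paper's proof: both select the last nonzero block $y_r$ of $y$, invoke hyperstability of the corresponding diagonal block $P_{rr}(\lambda)$ to produce $z_r$, and embed $z_r$ into a block vector $z$ supported only on the $r$-th block so that $z^*A_l y = z_r^*A_l^{(rr)}y_r$. No gaps; the key point about choosing the \emph{last} rather than the first nonzero block is exactly the one the paper relies on.
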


\begin{proof}
Let $P(\lambda)$ be an $n \times n$ quaternion matrix polynomial of 
degree $m$ and of the form 
\begin{equation*}
P(\lambda) = \begin{bmatrix}
P_{11}(\lambda) & P_{12}(\lambda) & \cdots & P_{1t}(\lambda) \\
0 & P_{22}(\lambda) & \cdots & P_{2t}(\lambda) \\
\vdots & \vdots & \ddots & \vdots \\
0 & 0 & \cdots & P_{tt}(\lambda)
\end{bmatrix}, 
\end{equation*}
where $P_{ij}(\lambda) = \displaystyle \sum_{l=0}^{m}A_l^{(ij)}\lambda^l$ are matrix 
polynomials of size $k_i \times k_j$ for $1 \leq i,j \leq t$ and 
$\displaystyle \sum_{j=1}^{t} k_j =n$. Assume that $P_{ii}(\lambda)$'s are hyperstable 
with respect to $\Omega$ for $1 \leq i \leq t$. Consider a nonzero vector 
$y =\begin{bmatrix}
y_1^T & y_2^T & \cdots & y_t^T
\end{bmatrix}^T \in \mathbb{H}^n$, where $y_i \in \mathbb{H}^{k_i}$, for 
$1 \leq i \leq t$. Let $r$ denote the index of the last nonzero $y_i$. Since 
$P_{rr}(\lambda)$ is hyperstable with respect to $\Omega$, there exists 
$z_r \in \mathbb{H}^{k_r} \setminus \{0\}$ such that for all $\lambda \in \Omega$ 
\begin{equation*}
z_r^*A_m^{(rr)}y_r \lambda^m + \cdots + z_r^*A_1^{(rr)}y_r \lambda + z_r^*A_0^{(rr)}y_r \neq 0. 
\end{equation*} 
Set $z:= \begin{bmatrix}
0 & \cdots  & z_r^T  & \cdots & 0 
\end{bmatrix}^T \in \mathbb{H}^n \setminus \{0\}$. On expanding $P(\lambda)$, 
we have
\begin{equation}
P(\lambda) = \begin{bmatrix}
\displaystyle \sum_{l=0}^{m}A_l^{(11)}\lambda^l & \displaystyle \sum_{l=0}^{m}A_l^{(12)}\lambda^l 
& \cdots & \displaystyle \sum_{l=0}^{m}A_l^{(1t)}\lambda^l\\
0 & \displaystyle \sum_{l=0}^{m}A_l^{(22)}\lambda^l & \cdots & \displaystyle 
\sum_{l=0}^{m}A_l^{(2t)}\lambda^l\\
\vdots & \vdots & \ddots & \vdots \\
0 & 0 & \cdots & \displaystyle \sum_{l=0}^{m}A_l^{(tt)}\lambda^l
\end{bmatrix} = \displaystyle \sum_{l=0}^{m}A_l\lambda^l,  
\end{equation}
where $A_l = \begin{bmatrix}
A_l^{(11)} & A_l^{(12)} & \cdots & A_l^{(1t)} \\
0 & A_l^{(22)} & \cdots & A_l^{(2t)} \\
\vdots & \vdots & \ddots & \vdots \\
0 & 0 & 0 & A_l^{(tt)} 
\end{bmatrix}$ for $1 \leq l \leq m$. For $1 \leq l \leq m$, consider 
\begin{equation}
z^*A_ly = \begin{bmatrix}
0 & \cdots & z_r^* & \cdots & 0
\end{bmatrix} \begin{bmatrix}
A_l^{(11)} & A_l^{(12)} & \cdots & A_l^{(1t)} \\
0 & A_l^{(22)} & \cdots & A_l^{(2t)} \\
\vdots & \vdots & \ddots & \vdots \\
0 & 0 & 0 & A_l^{(tt)} 
\end{bmatrix} \begin{bmatrix}
y_1 \\
\vdots \\
y_r \\
\vdots \\
0
\end{bmatrix} = z_r^*A_l^{(rr)}y_r. 
\end{equation}
We thus have $\displaystyle \sum_{l=0}^{m}z^*A_ly \lambda^i = \sum_{l=0}^{m} 
z_r^*A_l^{(rr)}y_r \lambda^i$. Since $P_{rr}(\lambda)$ is hyperstable with respect 
to $\Omega$, for any $\lambda \in \Omega$, 
$\displaystyle \sum_{l=0}^{m}z^*A_ly \lambda^i = 
\displaystyle \sum_{l=0}^{m} z_r^*A_l^{(rr)}y_r \lambda^i \neq 0$. 
This proves that $P(\lambda)$ is hyperstable with respect to $\Omega$. 
\end{proof} 

\medskip
\noindent
We now discuss equivalence of stability and hyperstability for quaternion polynomials. 
This will be used in the theorem that follows.

\medskip
\begin{theorem}\label{Thm-quaternion polynomial}
Let $p\colon \mathbb{H} \rightarrow \mathbb{H}$ be a quaternion scalar polynomial 
and $\Omega$ be any subset of $\mathbb{H}$. Then $p(\lambda)$ is hyperstable with 
respect to $\Omega$ if and only if $p(\lambda)$ is stable with respect to $\Omega$.  
\end{theorem}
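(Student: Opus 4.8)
The plan is to exploit the fact that in the scalar setting $n=1$, so that $y,z\in\mathbb{H}$ are quaternions and $z^\ast=\overline{z}$ is simply left multiplication by the conjugate. Writing $p(\lambda)=\sum_{i=0}^m a_i\lambda^i$ with $a_i\in\mathbb{H}$, the expression appearing in Definitions \ref{Def-stability-quaternion} and \ref{Def-hyperstability-quaternion} becomes $\sum_{i=0}^m \overline{z}\,a_i y\mu^i$. Since $\overline{z}$ is independent of the summation index, I would first factor it out on the left:
\[
\sum_{i=0}^m \overline{z}\,a_i y\,\mu^i=\overline{z}\Big(\sum_{i=0}^m a_i y\,\mu^i\Big).
\]
This is the structural simplification that makes the scalar case special: the witness $z$ enters only through left multiplication by the single nonzero quaternion $\overline{z}$.

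The key fact I would invoke is that $\mathbb{H}$ is a division ring, and hence has no zero divisors; consequently, for any nonzero $z$ and any $w\in\mathbb{H}$, we have $\overline{z}\,w\neq 0$ if and only if $w\neq 0$. Applying this with $w=\sum_{i=0}^m a_i y\,\mu^i$, the inner existential clause in the definition of stability, namely ``there exists $z\neq 0$ with $\overline{z}\,w\neq 0$'', is equivalent to the bare condition $w\neq 0$, a valid witness always being $z=1$. Thus I would first establish that $p(\lambda)$ is stable with respect to $\Omega$ if and only if $\sum_{i=0}^m a_i y\,\mu^i\neq 0$ for every nonzero $y\in\mathbb{H}$ and every $\mu\in\Omega$.

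The heart of the argument is then to observe that hyperstability collapses to exactly the same condition. In general hyperstability is strictly stronger than stability precisely because of the quantifier ordering ``for each $y$ there is a \emph{single} $z$ that works simultaneously for all $\mu\in\Omega$'', whereas stability lets $z$ depend on $\mu$. Here, however, the uniform choice $z=1$ already witnesses the inner condition for every $\mu$ at once: for a fixed nonzero $y$, the statement ``there exists $z\neq 0$ such that $\overline{z}\big(\sum_i a_i y\,\mu^i\big)\neq 0$ for all $\mu\in\Omega$'' is, again by the absence of zero divisors, equivalent to ``$\sum_i a_i y\,\mu^i\neq 0$ for all $\mu\in\Omega$''. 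Hence hyperstability with respect to $\Omega$ is also equivalent to $\sum_{i=0}^m a_i y\,\mu^i\neq 0$ for all nonzero $y$ and all $\mu\in\Omega$, matching the stability condition verbatim.

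Finally I would assemble the two directions: the implication hyperstability $\implies$ stability is already available from Proposition \ref{Prop-implications}, so only the converse requires the reduction above, and combining the two derived equivalences yields the stated biconditional. The one point demanding care — and the precise place where the noncommutative $n\times n$ case genuinely differs — is the interchange of the quantifiers on $z$ and $\mu$; I would emphasize that it is harmless here solely because the witness $z$ acts by an invertible left multiplication (indeed by a unit, for $z=1$), a feature that fails once $z^\ast$ becomes a genuine covector acting on $\mathbb{H}^n$ and can annihilate a nonzero vector.
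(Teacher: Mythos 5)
Your proposal is correct and follows essentially the same route as the paper: both arguments take $z=1$ as the uniform witness and rest on the fact that in the scalar case a nonzero $z^\ast=\overline{z}$ cannot annihilate a nonzero quaternion (the paper phrases this via Proposition \ref{Prop-stability and eigenvalues}, concluding that a vanishing value at some $\mu_0\in\Omega$ would make $\mu_0$ an eigenvalue and contradict stability). Your explicit reduction of both stability and hyperstability to the single condition $\sum_i a_i y\,\mu^i\neq 0$ is just a slightly more structural presentation of the same idea.
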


\begin{proof}
Let $p(\lambda) = a_m \lambda^m + \cdots +a_1\lambda +a_0$, where 
$a_i \in \mathbb{H}$, for $i = 0,1,\ldots, m$. If $p(\lambda)$ is hyperstable with 
respect to $\Omega$, then certainly, it is stable with respect to $\Omega$ by definition. 
Suppose $p(\lambda)$ is stable with respect to $\Omega$. Let 
$y \in \mathbb{H} \setminus \{0\}$ be arbitrary. Taking $z =1$, we have 
$z^* a_m y \mu^m + \cdots + z^* a_1 y \mu +z^* a_0 y \neq 0$ 
for all $\mu \in \Omega$. For otherwise, 
$a_m y \mu^m_0 + \cdots + a_1 y \mu_0 + a_0 y =0$ for some $\mu_0 \in \Omega$. 
Since $y \neq 0, \ \mu_0$ is an eigenvalue of $p(\lambda)$, a contradiction to 
Proposition \ref{Prop-stability and eigenvalues}. Therefore, 
$z^* a_m y \mu^m + \cdots + z^* a_1 y \mu +z^* a_0 y \neq 0$ for all $\mu \in \Omega$. 
This proves hyperstability of $p(\lambda)$ with respect to $\Omega$. 
\end{proof}

\medskip
\begin{remark}
Note that, in the complex case, a scalar is an eigenvalue of a scalar polynomial if and only 
if it is a zero of that polynomial. In the quaternion case, this condition 
does not generally hold. For example, consider $p(\lambda) = \lambda - k$. It is easy 
to verify that $-k$ is an eigenvalue of $p(\lambda)$ with eigenvector $y = i+j$, but not a 
zero of $p(\lambda)$. However, we have the following result for quaternion scalar 
polynomials.
\end{remark}

\medskip
\begin{lemma}\label{Lem-eigenvalue of scalar polynomial}
Let $p(\lambda) = \lambda^m + a_{m-1} \lambda^{m-1} + \cdots +a_1 \lambda +a_0$ 
be a quaternion monic scalar polynomial. Then $\lambda_0$ is an eigenvalue of 
$p(\lambda)$ if and only if $\lambda_0$ is a zero of the scalar polynomial 
$\displaystyle \sum_{k=1}^{2m} \sum_{i+j=k} a_i \overline{a_j} \lambda^k$.
\end{lemma}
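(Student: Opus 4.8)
The plan is to turn the eigenvalue condition, which is a single quaternion equation, into the vanishing of a genuinely scalar polynomial, and to recognise that scalar polynomial as $\Phi(\lambda) := p(\lambda)\overline{p}(\lambda)$, where $\overline{p}(\lambda) = \sum_{i=0}^{m}\overline{a}_i\lambda^i$ and the product is formed treating $\lambda$ as central, so that $\Phi(\lambda) = \sum_k\big(\sum_{i+j=k}a_i\overline{a}_j\big)\lambda^k$. By definition $\lambda_0$ is an eigenvalue of $p$ precisely when there is a nonzero $y\in\mathbb{H}$ with $\sum_{i=0}^{m}a_i y\lambda_0^i = 0$. First I would record that each coefficient $b_k = \sum_{i+j=k}a_i\overline{a}_j$ is real: pairing the terms indexed by $(i,j)$ and $(j,i)$ gives $a_i\overline{a}_j + a_j\overline{a}_i = 2\,\mathrm{Re}(a_i\overline{a}_j)\in\mathbb{R}$, while the diagonal term $a_{k/2}\overline{a}_{k/2} = |a_{k/2}|^2$ is real. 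Hence $\Phi$ is the scalar polynomial appearing in the statement, and the claim reduces to: $\lambda_0$ is an eigenvalue of $p$ if and only if $\Phi(\lambda_0) = 0$.

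Next I would reduce to the case $\lambda_0 = c\in\mathbb{C}$. Right eigenvalues are invariant under similarity: if $\sum_i a_i y\lambda_0^i = 0$ and $\mu = s^{-1}\lambda_0 s$ with $s\neq 0$, then $ys$ witnesses $\sum_i a_i(ys)\mu^i = \big(\sum_i a_i y\lambda_0^i\big)s = 0$, so the whole similarity class of $\lambda_0$ consists of eigenvalues or none does. Choosing the standard representative $c = \mathrm{Re}(\lambda_0) + |\mathrm{Im}(\lambda_0)|\,i\in\mathbb{C}$ of that class, $\lambda_0$ is an eigenvalue if and only if $c$ is. On the other side, since $\Phi$ has real coefficients and $\lambda_0 = s^{-1}cs$, one has $\Phi(\lambda_0) = s^{-1}\Phi(c)s$, so $\Phi(\lambda_0)=0$ if and only if $\Phi(c)=0$. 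Thus it suffices to prove the equivalence for complex $c$.

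For complex $c$ I would pass to the decomposition $\mathbb{H} = \mathbb{C}\oplus\mathbb{C}j$. Writing $a_i = \alpha_i + \beta_i j$ and $y = u + vj$ with $\alpha_i,\beta_i,u,v\in\mathbb{C}$, and using $jw = \overline{w}j$ for $w\in\mathbb{C}$ together with $j^2 = -1$, I would expand $\sum_i a_i y c^i = 0$ and separate its $\mathbb{C}$-part from its $j$-part. This produces two complex equations; conjugating the second yields a homogeneous $2\times 2$ complex linear system in the unknowns $u,\overline{v}$ whose coefficient matrix is exactly the complex adjoint $P_\chi(c) = \sum_i \bigchi_{a_i}c^i$ acting on $\begin{bmatrix} u \\ -\overline{v}\end{bmatrix}$. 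Since $y\neq 0$ if and only if $(u,v)\neq(0,0)$, a nonzero $y$ exists if and only if $\det P_\chi(c) = 0$.

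Finally I would compute this determinant and match it with $\Phi$. Writing $A(c)=\sum_i\alpha_i c^i$, $B(c)=\sum_i\beta_i c^i$, $A^\ast(c)=\sum_i\overline{\alpha}_i c^i$, $B^\ast(c)=\sum_i\overline{\beta}_i c^i$, one gets $\det P_\chi(c) = A(c)A^\ast(c)+B(c)B^\ast(c) = \sum_{i,j}(\alpha_i\overline{\alpha}_j+\beta_i\overline{\beta}_j)\,c^{i+j}$. The identity $a_i\overline{a}_j = (\alpha_i\overline{\alpha}_j+\beta_i\overline{\beta}_j) + (\beta_i\alpha_j-\alpha_i\beta_j)j$ (again from $jw=\overline{w}j$ and $j^2=-1$) identifies $\alpha_i\overline{\alpha}_j+\beta_i\overline{\beta}_j$ as the $\mathbb{C}$-part of $a_i\overline{a}_j$; since $b_k=\sum_{i+j=k}a_i\overline{a}_j$ is real, its $\mathbb{C}$-part equals $b_k$ itself, whence $\det P_\chi(c)=\sum_k b_k c^k = \Phi(c)$. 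Combining the three reductions gives $\lambda_0$ is an eigenvalue of $p$ iff $\Phi(\lambda_0)=0$, as required. The main obstacle is the bookkeeping in the last two paragraphs: carrying the conjugations correctly through the $\mathbb{C}\oplus\mathbb{C}j$ expansion so that the $2\times 2$ system really is $P_\chi(c)$, and verifying the determinant identity $A A^\ast + B B^\ast = \Phi$. The similarity reduction, though conceptually the key step that replaces an arbitrary quaternion by a complex number, is routine once the class-invariance of both sides is in hand.
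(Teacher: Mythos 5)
Your proposal is correct, but it takes a genuinely different and much more self-contained route than the paper. The paper disposes of the lemma in two sentences by citation: it forms the companion matrix of $p(\lambda)$, invokes Theorem $5.1$ of Ahmad--Ali to identify the eigenvalues of $p(\lambda)$ with the right eigenvalues of that companion matrix, and then invokes Theorem $4.2$ of Pereira--Rocha to identify those right eigenvalues with the zeros of the stated real polynomial. You instead recognise that polynomial directly as $p(\lambda)\overline{p}(\lambda)$ with $\lambda$ treated as central, check that its coefficients are real (via $a_i\overline{a}_j+a_j\overline{a}_i=2\operatorname{Re}(a_i\overline{a}_j)$), reduce to a complex eigenvalue $c$ using similarity invariance of both sides of the equivalence, and then convert the eigenvalue equation over $\mathbb{H}=\mathbb{C}\oplus\mathbb{C}j$ into a homogeneous $2\times 2$ complex system with matrix $P_\chi(c)$ applied to $(u,-\overline{v})^{T}$, whose determinant $AA^{\ast}+BB^{\ast}$ you match with $\Phi(c)$. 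All of these identities check out, so your argument in effect reproves the scalar cases of the two cited theorems from scratch; what it buys is independence from the external determinant machinery of Pereira--Rocha, at the cost of length, while the paper's version is shorter but opaque without the references. One point to flag against the paper rather than against you: the sum in the statement starts at $k=1$ and so omits the constant term $|a_0|^2$; your $\Phi=p\overline{p}$ correctly includes it (for $p(\lambda)=\lambda-k$ one gets $\Phi(\lambda)=\lambda^2+1$, and dropping the constant term would give the wrong zero set), so the lower limit in the statement is evidently a typo for $k=0$.
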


\begin{proof}
Consider the companion matrix $A = \begin{bmatrix}
0 & 1 & \cdots & 0 \\
\vdots & \vdots & \ddots & \vdots \\
0 & 0 & \cdots & 1 \\
-a_0 & -a_1 & \cdots & -a_{m-1} 
\end{bmatrix}$ of $p(\lambda)$. Theorem $5.1$ of \cite{Ahmad-Ali-1} tells us 
that $\lambda_0$ is an eigenvalue of $p(\lambda)$ if and only if $\lambda_0$ is an 
eigenvalue of $A$. The desired conclusion follows from Theorem $4.2$ of 
\cite{Pereira-Rocha}.
\end{proof}

\medskip
\noindent
We are now in a position to prove the main result of this section.

\medskip
\begin{theorem}\label{Thm-equivalence of stability and hyperstability}
Let $P(\lambda)$ be as in \eqref{eq-matrix polynomial} with leading coefficient to 
be the identity matrix, and let $\Omega \subseteq \mathbb{H}$ be nonempty. 
If $P(\lambda)$ is an upper-triangular matrix polynomial, then $P(\lambda)$ is 
hyperstable with respect $\Omega$ if and only if it is stable with respect to $\Omega$.
\end{theorem}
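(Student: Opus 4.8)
The plan is to reduce the statement to two facts already in hand: the scalar equivalence of stability and hyperstability (Theorem~\ref{Thm-quaternion polynomial}) and the transfer of hyperstability through block upper triangular structure (Theorem~\ref{Thm-upper triangular matrix polynomial}). One implication is free: hyperstability always implies stability by Proposition~\ref{Prop-implications}. So everything rests on showing that stability with respect to $\Omega$ forces hyperstability.

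Write $P(\lambda)=\sum_{l=0}^{m}A_l\lambda^l$ with each $A_l$ upper triangular and $A_m=I$, and let $P_{ii}(\lambda)=\sum_{l=0}^{m}(A_l)_{ii}\lambda^l$ be the $i$-th diagonal entry, a monic scalar quaternion polynomial of degree $m$ since $(A_m)_{ii}=1$. Regarding $P$ as a block upper triangular polynomial whose diagonal blocks are the scalar polynomials $P_{11},\ldots,P_{nn}$, Theorem~\ref{Thm-upper triangular matrix polynomial} shows it suffices to prove that each $P_{ii}$ is hyperstable with respect to $\Omega$; by Theorem~\ref{Thm-quaternion polynomial} this is the same as proving each $P_{ii}$ is stable with respect to $\Omega$, i.e. (Proposition~\ref{Prop-stability and eigenvalues}) that no $P_{ii}$ has an eigenvalue in $\Omega$. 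Thus the whole theorem follows once I show: if $P$ has no eigenvalue in $\Omega$, then no diagonal entry $P_{ii}$ has an eigenvalue in $\Omega$.

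I would prove the contrapositive by manufacturing an eigenvector of $P$ out of an eigenvector of a diagonal entry. Suppose some diagonal entry has an eigenvalue $\mu\in\Omega$, let $r$ be the \emph{smallest} index for which $\mu$ is an eigenvalue of $P_{rr}$, and fix a nonzero $y_r\in\mathbb{H}$ with $\sum_l (A_l)_{rr}y_r\mu^l=0$. I look for $y=(y_1,\ldots,y_n)^T$ with $y_j=0$ for $j>r$ and $\sum_l A_l y\,\mu^l=0$. For a row $i>r$ all terms vanish, since $(A_l)_{ij}=0$ whenever $i>j$ and $y_j=0$ whenever $j>r$; for $i=r$ the equation is exactly $\sum_l(A_l)_{rr}y_r\mu^l=0$; and for $i<r$ it reads $\phi_i(y_i)=-\sum_{j=i+1}^{r}\sum_l (A_l)_{ij}y_j\mu^l$, where $\phi_i(x):=\sum_l (A_l)_{ii}x\mu^l$. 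Hence $y_{r-1},\ldots,y_1$ may be found by backward substitution, running $i$ from $r-1$ down to $1$, provided each map $\phi_i$ is invertible.

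The decisive point is therefore the invertibility of $\phi_i$ for $i<r$, and this is where the quaternionic structure enters. Each $\phi_i$ is an $\mathbb{R}$-linear endomorphism of the four-dimensional real vector space $\mathbb{H}$, so it is bijective precisely when it is injective; and $\ker\phi_i=\{0\}$ says exactly that no nonzero $x$ satisfies $\sum_l(A_l)_{ii}x\mu^l=0$, that is, that $\mu$ is not an eigenvalue of the scalar polynomial $P_{ii}$. By the minimality of $r$ this holds for every $i<r$, so every $\phi_i$ appearing in the substitution is invertible, the construction terminates, and the resulting $y$ is nonzero because $y_r\neq0$. Then $P(\mu)y=0$ exhibits $\mu\in\Omega$ as an eigenvalue of $P$, contradicting stability. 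I expect this identification of $\ker\phi_i$ with the eigenvalue condition on $P_{ii}$ to be the only genuinely delicate step; the rest is triangular bookkeeping already absorbed into Theorems~\ref{Thm-upper triangular matrix polynomial} and~\ref{Thm-quaternion polynomial}.
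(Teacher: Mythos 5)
Your proposal is correct, and its outer skeleton coincides with the paper's: both reduce the theorem to showing that stability of $P(\lambda)$ forces stability of each diagonal scalar polynomial $p_{ll}(\lambda)$, and then invoke Theorem~\ref{Thm-quaternion polynomial} followed by Theorem~\ref{Thm-upper triangular matrix polynomial}. Where you genuinely diverge is in the key step of lifting an eigenvalue of a diagonal entry to an eigenvalue of $P(\lambda)$. The paper routes this through Lemma~\ref{Lem-eigenvalue of scalar polynomial} and the companion-matrix machinery: an eigenvalue $\lambda_0\in\Omega$ of some $p_{ll}$ is a zero of the associated real-coefficient polynomial $\sum_{k}\sum_{i+j=k}a_i^{(ll)}\overline{a_j^{(ll)}}\lambda^k$, hence of the product of these over $l$, hence (by Corollary $4.3$ of Pereira--Rocha) an eigenvalue of the block companion matrix, hence (by Theorem $5.1$ of Ahmad--Ali) an eigenvalue of $P(\lambda)$. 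You instead build the eigenvector of $P(\mu)$ directly by backward substitution from the last nonzero coordinate, the decisive observation being that each evaluation map $\phi_i(x)=\sum_l (A_l)_{ii}x\mu^l$ is an $\mathbb{R}$-linear endomorphism of the four-dimensional real space $\mathbb{H}$, hence invertible exactly when $\mu$ is not an eigenvalue of $p_{ii}$; minimality of $r$ guarantees this for all $i<r$. Your argument is more elementary and self-contained (it avoids the quaternionic determinant results entirely), and it does not actually use monicity of the diagonal entries, so it would apply verbatim to any upper-triangular $P(\lambda)$ whose coefficients need not have identity leading term; the paper's route, by contrast, needs the monic normalization to form the companion matrix but delegates the combinatorial work to cited results. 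Both are valid; yours trades citations for a short linear-algebra argument over $\mathbb{H}$ viewed as $\mathbb{R}^4$.
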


\begin{proof}
Let $P(\lambda) = \begin{bmatrix}
p_{11}(\lambda) & p_{12}(\lambda)& \cdots & p_{1n}(\lambda) \\
0 & p_{22}(\lambda) & \cdots & p_{2n}(\lambda) \\
\vdots & \vdots & \ddots & \vdots \\
0 & 0 & \cdots & p_{nn}(\lambda)
\end{bmatrix} = I \lambda^m + A_{m-1} \lambda^{m-1} + \cdots + A_1 \lambda +A_0$, 
where $p_{ij}(\lambda) = \lambda^m + a_{m-1}^{(ij)} \lambda^{m-1} + 
\cdots+ a^{(ij)}_1 \lambda + a^{(ij)}_0$ 
are quaternion monic scalar polynomials and $A_i \in M_n(\mathbb{H})$ are 
upper-triangular matrices. Suppose $P(\lambda)$ is stable with respect to $\Omega$. 
We then claim that $p_{ll}(\lambda)$ are stable with respect to $\Omega$ for all $l=1,2,\ldots,n$. 
If $p_{ll}(\lambda)$ is not stable with respect $\Omega$ for some $l=1,2,\ldots,n$, 
then by Proposition \ref{Prop-stability and eigenvalues}, there exists an eigenvalue 
$\lambda_0 \in \Omega$ of $p_{ll}(\lambda)$. From Lemma 
\ref{Lem-eigenvalue of scalar polynomial}, it follows that $\lambda_0$ is a 
zero of the scalar polynomial 
$ \displaystyle \sum_{k=1}^{2m} \sum_{i+j=k} a^{(ll)}_i \overline{a_j^{(ll)}} \lambda^k$. 
This implies that $\lambda_0$ is a zero of the scalar polynomial 
$\displaystyle \prod_{l=1}^{n} \left(\sum_{k=1}^{2m} \sum_{i+j=k} a^{(ll)}_i 
\overline{a_j^{(ll)}} \lambda^k \right)$. From Corollary $4.3$ of \cite{Pereira-Rocha},  $\lambda_0$ is an eigenvalue of the block matrix 
$C = \begin{bmatrix}
		0 & I & 0 & \cdots & 0 \\
		0 & 0 & I & \cdots & 0 \\
		\vdots & \vdots & \vdots & \ddots & \vdots \\
		0 & 0 & 0 & \cdots & I \\
		-A_0 & -A_1 & -A_2 & \cdots & -A_{m-1} 
\end{bmatrix}$. Thus, by Theorem $5.1$ of \cite{Ahmad-Ali-1}, $\lambda_0$ is 
an eigenvalue of $P(\lambda)$, thereby proving that $P(\lambda)$ is not stable with 
respect to $\Omega$. This contradiction proves that $p_{ll}(\lambda)$ are stable with 
respect to $\Omega$ for all $l=1,2,\ldots,n$. From Theorem 
\ref{Thm-quaternion polynomial}, we then infer that $p_{ll}(\lambda)$ are hyperstable 
with respect to $\Omega$ for all $l =1,2,\ldots,n$. The desired conclusion then 
follows from Theorem \ref{Thm-upper triangular matrix polynomial}. The converse 
follows from the definition.
\end{proof}

\begin{remark}
In Theorem $3.7$ of \cite{Oskar-Wojtylak}, the authors prove that stability and 
hyperstability are equivalent for complex matrix polynomials of the form 
$P(\lambda) = p(\lambda)A+q(\lambda)B$ with some scalar complex polynomials $p(\lambda)$, $q(\lambda)$ and $A, B \in M_n(\mathbb{C})$. The proof uses 
the well known generalized Schur form. However, due to noncommutativity of quaternions 
this is not amenable in the quaternion setting unless the variable itself is real and $A$ and $B$ 
are complex matrices. 
We wish to point out that Theorem \ref{Thm-equivalence of stability and hyperstability} 
is one interesting result which avoids this classical technique and noncommutativity of quaternions comes into picture.  
\end{remark}

\medskip

\subsection{Stability and hyperstability of multivariate quaternion matrix polynomials} 
\hspace*{\fill}\label{subsec-4.4}

We now define multivariate quaternion matrix polynomials and extend the notions of 
stability and hyperstability for the same. These are analogous to the 
definitions given in \cite{Oskar-Wojtylak} for multivariate complex matrix 
polynomials. In particular, we establish hyperstability of 
univariate quaternion matrix polynomials from stability of multivariate quaternion matrix 
polynomials. Although some of the results are very similar to those from 
\cite{Oskar-Wojtylak} in the complex case and are amenable for generalization to the 
quaternion setting, not everything goes through due to noncommutativity of the 
variables. We begin with the definitions and present some examples to illustrate the same.   

\medskip
\begin{definition}\label{Def-multivariate matrix polynomial}
A multivariate right quaternion matrix polynomial in $k$ variables of degree 
$m$ and size $n$ is a function 
$P(\lambda_1, \lambda_2, \ldots, \lambda_k) \colon \mathbb{H}^k 
\rightarrow M_n(\mathbb{H})$ defined by 
\begin{equation}\label{Eqn-multivariable polynomial}
P(\lambda_1, \lambda_2, \ldots, \lambda_k) = \displaystyle \sum_{w \in W} A_{w} 
w(\lambda_1, \lambda_2, \ldots, \lambda_k), 
\end{equation}
where $W$ is the set of all possible words of finite length formed from the 
noncommuting letters $\{ \lambda_1, \lambda_2, \ldots, \lambda_k \}$ such that 
the sum of the powers does not exceed $m$.
\end{definition}

\medskip
\begin{definition}\label{Def-multivariate-stability}
Let $\Omega$ be a subset of $\mathbb{H}$. We say that the multivariate right 
quaternion matrix polynomial $P(\lambda_1, \lambda_2, \ldots, \lambda_k) = 
\displaystyle \sum_{w \in W} A_{w} w(\lambda_1, \lambda_2, \ldots, \lambda_k)$ is 
stable with respect to $\Omega^k$, if for any $y \in \mathbb{H}^n \setminus \{0\}$ 
and $ (\mu_1, \mu_2, \ldots, \mu_k) \in \Omega^k$, there exists $z \in \mathbb{H}^n 
\setminus \{0\}$ such that 
\begin{equation}\label{Eqn-multivariable stability}
\displaystyle \sum_{w \in W} z^\ast A_{w}y w(\mu_1, \mu_2, \ldots, \mu_k) \neq 0.
\end{equation}
\end{definition}

\medskip

\begin{definition}\label{Def-multivariate-hyperstability}
Let $\Omega$ be a subset of $\mathbb{H}$. We say that the multivariate right 
quaternion matrix polynomial $P(\lambda_1, \lambda_2, \ldots, \lambda_k) = 
\displaystyle \sum_{w \in W} A_{w} w(\lambda_1, \lambda_2, \ldots, \lambda_k)$ 
is hyperstable with respect to $\Omega^k$, if for any 
$y \in \mathbb{H}^n \setminus \{0\}$, there exists 
$z \in \mathbb{H}^n \setminus \{0\}$ such that 
\begin{equation}\label{Eqn-multivariable hyperstability}
\displaystyle \sum_{w \in W} z^\ast A_{w}y w(\mu_1, \mu_2, \ldots, \mu_k) \neq 0 
\hspace{0.3cm} 
\text{for all} \hspace{0.2cm} (\mu_1, \mu_2, \ldots, \mu_k) \in \Omega^k.
\end{equation}
\end{definition}

\medskip
\noindent
In the following two examples, we illustrate these definitions for multivariate right 
quaternion matrix polynomials.

\begin{example}\label{ex-3}
Let $P(\lambda_1, \lambda_2) = \begin{bmatrix}
1 & 0 \\
0 & 1
\end{bmatrix} \lambda_1 \lambda_2 + \begin{bmatrix}
1 & 0 \\
0 & 1
\end{bmatrix} \lambda_2$ and $\Omega \subseteq \mathbb{H}$ such that 
$0, -1 \notin \Omega$. Let us take an element 
$y = \begin{bmatrix}
y_1 \\
y_2
\end{bmatrix} \in \mathbb{H}^2 \setminus \{0\}$ and let 
$(\mu_1,\mu_2) \in \Omega^2$ be arbitrary. Consider 
\begin{align*}
\begin{bmatrix}
1 & 0 \\
0 & 1
\end{bmatrix} \begin{bmatrix}
y_1 \\
y_2
\end{bmatrix} \mu_1 \mu_2 + \begin{bmatrix}
1 & 0 \\
0 & 1
\end{bmatrix} \begin{bmatrix}
y_1 \\
y_2
\end{bmatrix} \mu_2 & = \begin{bmatrix}
y_1 (\mu_1 +1)\mu_2 \\
y_2(\mu_1 + 1)\mu_2
\end{bmatrix}.
\end{align*} 
If $y_1 \neq 0$, choose $z= \begin{bmatrix}
1 \\
0
\end{bmatrix}$. Then, we have 
\begin{align*}
& \begin{bmatrix}
1 & 0
\end{bmatrix}
\begin{bmatrix}
1 & 0 \\
0 & 1
\end{bmatrix} \begin{bmatrix}
y_1 \\
y_2
\end{bmatrix} \mu_1 \mu_2 + \begin{bmatrix}
1 & 0
\end{bmatrix} \begin{bmatrix}
1 & 0 \\
0 & 1
\end{bmatrix} \begin{bmatrix}
y_1 \\
y_2
\end{bmatrix} \mu_2 \\
& = y_1 (\mu_1 +1)\mu_2 \neq 0 \, \text{for any} \,(\mu_1, \mu_2) \in \Omega^2. 
\end{align*} 
 If $y_2 \neq 0$, choose $z = \begin{bmatrix}
0 \\
1
\end{bmatrix}$. Then, we have 
\begin{align*}
& \begin{bmatrix}
0 & 1
\end{bmatrix}
\begin{bmatrix}
1 & 0 \\
0 & 1
\end{bmatrix} \begin{bmatrix}
y_1 \\
y_2
\end{bmatrix} \mu_1 \mu_2 + \begin{bmatrix}
0 & 1
\end{bmatrix} \begin{bmatrix}
1 & 0 \\
0 & 1
\end{bmatrix} \begin{bmatrix}
y_1 \\
y_2
\end{bmatrix} \mu_2 \\
& = y_2 (\mu_1 +1)\mu_2 \neq 0 \, \text{for any} \, (\mu_1, \mu_2) \in \Omega^2. 
\end{align*} 
Thus, in either of the cases, $P(\lambda_1, \lambda_2)$ is hyperstable with 
respect to $\Omega^2$.
\end{example}

\medskip
\noindent
Here is an example of a matrix polynomial which is not stable with respect to a given 
set.

\medskip
\begin{example}\label{ex-4}
Let $P(\lambda_1, \lambda_2) = \begin{bmatrix}
1 & 0 \\
0 & 1
\end{bmatrix} \lambda_1 \lambda_2 + \begin{bmatrix}
1 & 0 \\
0 & 0
\end{bmatrix} \lambda_2 \lambda_1 + \begin{bmatrix}
1 & 0 \\
0 & 1
\end{bmatrix}\lambda_1 + \begin{bmatrix}
1 & 0 \\
0 & 1
\end{bmatrix}$ and $\Omega = B(0;1) \subseteq \mathbb{H}$. Consider $y = 
\begin{bmatrix}
1 \\
0
\end{bmatrix} \in \mathbb{H}^2$ and 
$(\mu_1, \mu_2) = (-\frac{1}{2}, \frac{1}{2}) \in \Omega^2$. Then 
\begin{align*}
& \begin{bmatrix}
1 & 0 \\
0 & 1
\end{bmatrix} 
\begin{bmatrix}
1 \\
0
\end{bmatrix} \mu_1 \mu_2 + \begin{bmatrix}
1 & 0 \\
0 & 0
\end{bmatrix} \begin{bmatrix}
1 \\
0
\end{bmatrix} \mu_2 \mu_1 + \begin{bmatrix}
1 & 0 \\
0 & 1
\end{bmatrix} \begin{bmatrix}
1 \\
0
\end{bmatrix}\mu_1 + \begin{bmatrix}
1 & 0 \\
0 & 1
\end{bmatrix} \begin{bmatrix}
1 \\
0
\end{bmatrix} \\
& = \begin{bmatrix}
1\\
0
\end{bmatrix} \bigg( -\frac{1}{4} \bigg) + \begin{bmatrix}
1\\
0
\end{bmatrix} \bigg( -\frac{1}{4} \bigg) + \begin{bmatrix}
1\\
0
\end{bmatrix} \bigg( -\frac{1}{2} \bigg) +\begin{bmatrix}
1\\
0
\end{bmatrix} \\
& = \begin{bmatrix}
-\frac{1}{4}-\frac{1}{4}-\frac{1}{2} + 1\\
0
\end{bmatrix} = \begin{bmatrix}
0 \\
0
\end{bmatrix}.
\end{align*}
Therefore, for the above chosen $y$ and $(\mu_1, \mu_2) \in \Omega^2$, there 
exists no $z \in \mathbb{H}^2 \setminus \{0\}$ such that Equation 
\eqref{Eqn-multivariable stability} holds. Therefore, $P(\lambda_1, \lambda_2)$ 
is not stable with respect to $\Omega^2$.
\end{example}

\medskip
\noindent
We present some sufficient conditions for hyperstability 
of quaternion matrix polynomials in one variable via stability of multivariate 
quaternion matrix polynomials. 

\medskip
\begin{theorem}\label{multivariate-univariate-1}
Let $P(\lambda)= A_2 \lambda^2 + A_1 \lambda + A_0$ be a quaternion matrix 
polynomial and $\Omega \subseteq \mathbb{H}$ be nonempty. Then $P(\lambda)$ is 
hyperstable with respect to $\Omega$ if any one of the following holds:
\begin{itemize}
\item[(i)] $P(\lambda_1, \lambda_2) = A_2 \lambda_1^2 + A_1 \lambda_2 +A_0$ is 
stable with respect to $\Omega^2$.
\item[(ii)] $P(\lambda_1, \lambda_2) = A_2 \lambda_1 \lambda_2 + A_1 \lambda_2 +A_0$ 
is stable with respect to $\Omega^2$ and $0 \notin \Omega$.
\end{itemize}
\end{theorem}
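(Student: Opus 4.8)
The plan is to reduce each hypothesis to a clean non-vanishing statement and then, working one $y$ at a time, to build a single vector $z$ that serves every $\mu\in\Omega$. First I would record the reformulation that makes the hypothesis usable: exactly as in Proposition \ref{Prop-stability and eigenvalues}, for a fixed vector $v$ there is a nonzero $z$ with $z^\ast v\neq 0$ (take $z=v$) precisely when $v\neq 0$, so taking $z=e_i$ in \eqref{Eqn-multivariable stability} shows that stability of $P(\lambda_1,\lambda_2)$ with respect to $\Omega^2$ (Definition \ref{Def-multivariate-stability}) is equivalent to the vector non-vanishing $A_2y\,\mu_1^2+A_1y\,\mu_2+A_0y\neq 0$ for all nonzero $y$ and all $(\mu_1,\mu_2)\in\Omega^2$ in case (i), and $A_2y\,\mu_1\mu_2+A_1y\,\mu_2+A_0y\neq 0$ in case (ii).

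Next, fix a nonzero $y$ and write $a=A_2y$, $b=A_1y$, $c=A_0y$. Establishing hyperstability of $P(\lambda)$ at $y$ (Definition \ref{Def-hyperstability-quaternion}) amounts to producing one nonzero $z$ for which the scalar right polynomial $q_z(\lambda)=(z^\ast a)\lambda^2+(z^\ast b)\lambda+(z^\ast c)$ has no zero in $\Omega$; by Proposition \ref{Prop-stability and eigenvalues} and Theorem \ref{Thm-quaternion polynomial} this is the same as $q_z$ being stable, equivalently hyperstable, with respect to $\Omega$. Note that specialising the reformulated hypothesis to $\mu_1=\mu_2=\mu$ only yields $a\mu^2+b\mu+c\neq 0$ for each $\mu$, i.e.\ stability of $P(\lambda)$; the whole point is that the freedom to move $\mu_1$ and $\mu_2$ independently must be spent to upgrade this to a uniform $z$.

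For the construction I would split into two cases. If $c\notin\mathrm{span}_{\mathbb H}\{a,b\}$, I would choose $z$ real-orthogonal to the right subspace $a\mathbb H+b\mathbb H$ with $z^\ast c\neq 0$; then $q_z\equiv z^\ast c\neq 0$ and we are done immediately. In the dependent case $c\in\mathrm{span}_{\mathbb H}\{a,b\}$ I would use the decoupled hypothesis to constrain the leading and middle coefficients of $q_z$ separately, and then invoke Lemma \ref{Lem-eigenvalue of scalar polynomial} to convert ``$q_z$ has no zero in $\Omega$'' into the vanishing condition for the associated scalar polynomial $\sum_{k}\sum_{i+j=k}(\cdot)\overline{(\cdot)}\lambda^k$, choosing $z$ so that this companion-type polynomial avoids $\Omega$. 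For part (ii) the argument is analogous after factoring $A_2y\,\mu_1\mu_2+A_1y\,\mu_2=(A_2y\,\mu_1+A_1y)\mu_2$; the hypothesis $0\notin\Omega$ is used precisely to invert the outer factor $\mu_2$ and thereby reduce the nested product $\mu\cdot\mu$ to the same situation treated in (i).

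The hard part will be the dependent case and, within it, the \emph{uniform} choice of $z$. Over $\mathbb C$ one simply reads the roots of $q_z$ from the sum and product of its coefficients and pushes them out of $\Omega$; over $\mathbb H$ the coefficients $z^\ast a,\ z^\ast b,\ z^\ast c$ multiply noncommutatively with $\lambda$, so the elementary root--coefficient relations fail and I must route everything through the scalar zero characterisation (Lemma \ref{Lem-eigenvalue of scalar polynomial}) together with the stable~$\Leftrightarrow$~hyperstable equivalence for scalar polynomials (Theorem \ref{Thm-quaternion polynomial}). Guaranteeing that a single $z$ works for \emph{all} $\mu\in\Omega$ at once, rather than the $\mu$-dependent $z$ that the stability hypothesis literally provides, is exactly the obstacle that the decoupling into two independent variables (and, in (ii), the condition $0\notin\Omega$) is designed to overcome.
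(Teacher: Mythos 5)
Your opening reduction and your first case are sound and match the paper: taking $z=e_i$ turns the two-variable stability hypothesis into the vector non-vanishing $A_2y\,\mu_1^2+A_1y\,\mu_2+A_0y\neq 0$, and when $A_0y\notin\operatorname{span}_{\mathbb H}\{A_2y,A_1y\}$ a $z$ orthogonal to $A_2y,A_1y$ with $z^\ast A_0y\neq 0$ finishes immediately. But the dependent case --- which is where the entire content of the theorem lives --- is not an argument in your write-up; it is a pointer to Lemma \ref{Lem-eigenvalue of scalar polynomial} plus the phrase ``choosing $z$ so that this companion-type polynomial avoids $\Omega$,'' which is precisely the assertion to be proved. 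Moreover that lemma is the wrong tool here on two counts: it characterises \emph{eigenvalues} of a \emph{monic} scalar quaternion polynomial, whereas hyperstability at $y$ requires that $q_z(\mu)=(z^\ast A_2y)\mu^2+(z^\ast A_1y)\mu+(z^\ast A_0y)$ have no \emph{zero} in $\Omega$, and $q_z$ is not monic. Your claim that ``$q_z$ has no zero in $\Omega$'' is \emph{the same as} $q_z$ being stable with respect to $\Omega$ is also false in one direction: a zero is always an eigenvalue (eigenvector $1$), but not conversely --- the paper's own remark after Theorem \ref{Thm-quaternion polynomial} exhibits $p(\lambda)=\lambda-k$ with eigenvalue $-k$ that is not a zero.

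What the paper actually does in the dependent case, and what is missing from your plan, is this: write $A_0y=A_2y\,q_2+A_1y\,q_1$ and factor
\begin{equation*}
A_2y\,\lambda^2+A_1y\,\lambda+A_0y \;=\; A_2y\,(\lambda^2+q_2)+A_1y\,(\lambda+q_1).
\end{equation*}
If $\lambda^2+q_2$ had a zero $\mu_1\in\Omega$ and $\lambda+q_1$ a zero $\mu_2\in\Omega$, then $A_2y\,\mu_1^2+A_1y\,\mu_2+A_0y=0$, contradicting two-variable stability; so one of the two \emph{monic scalar} factors is zero-free on $\Omega$. One then sub-cases again: either some $z$ annihilates one of $A_2y,A_1y$ but not the other (and then $z^\ast P(\lambda)y$ is a product of two nonzero quaternions for every $\lambda\in\Omega$, hence nonzero since $\mathbb H$ is a division ring), or a further right-dependence $A_2y=A_1y\,q_3$ (resp.\ $A_1y=A_2y\,q_4$) collapses $P(\lambda)y$ to a single vector times a scalar polynomial, and $z=A_1y$ (resp.\ $z=A_2y$) works because ordinary one-variable stability of $P(\lambda)$ (obtained by setting $\lambda_1=\lambda_2$) forces that scalar factor to be zero-free on $\Omega$. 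This chain of dependence-driven factorisations is the mechanism that produces the uniform $z$; without it your proposal does not establish the dependent case, and the same gap carries over to part (ii).
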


\begin{proof}
We present here only the proof of $(i)$, as the proof of $(ii)$ is similar. Suppose 
$P(\lambda_1, \lambda_2) = A_2 \lambda_1^2 + A_1 \lambda_2 +A_0$ is 
stable with respect to $\Omega$. Note that, for $\lambda_1 = \lambda_2 = \lambda$, we 
have $P(\lambda_1, \lambda_2) = P(\lambda)$. Therefore, $P(\lambda)$ is stable with 
respect to $\Omega$. Let $y \in \mathbb{H}^n \setminus \{0\}$ be arbitrary but fixed. 
If there is a $z \in \mathbb{H}^n \setminus \{0\}$ such that $\langle A_2y, z \rangle =0, 
\langle A_1y, z \rangle = 0$ and $\langle A_0y, z\rangle \neq 0$, then 
\begin{center}
$z^* A_2y \lambda^2 + z^* A_1 y \lambda + z^* A_0y = z^*A_0 y \neq 0$ 
for any $\lambda \in \Omega$. 
\end{center}
Therefore $P(\lambda)$ is hyperstable with respect to $\Omega$.\\
If there is no $z \in \mathbb{H}^n \setminus \{0\}$ such that 
$\langle A_2y,z \rangle =0, \langle A_1y, z \rangle = 0$ and 
$\langle A_0y, z \rangle \neq 0$, then 
$A_0y \in \textit{span}_{\mathbb{H}}\{A_2y, A_1y\}$, so that 
$A_0y = A_2y q_2 +A_1yq_1$ for some $q_1, q_2 \in \mathbb{H}$. Then, we have 
\begin{equation}\label{Eqn-*}
A_2 y \lambda^2 + A_1 y \lambda + A_0 y = A_2 y (\lambda^2 +q_2) +A_1 y (\lambda + q_1). 
\end{equation}
Suppose $\mu_1 , \mu_2 \in \Omega$ are zeros of $\lambda^2 + q_2$ and $\lambda + q_1$ 
respectively. Then, $ A_2 y \mu_1^2 +A_1 y \mu_2 + A_0 y = A_2 y \mu_1^2 +A_1 y \mu_2 + 
A_2y q_2 + A_1y q_1 = A_2 y (\mu_1^2 +q_2) +A_1 y (\mu_2 + q_1) = 0$. This implies 
$P(\lambda_1, \lambda_2)$ is not stable with respect to $\Omega^2$, which is a 
contradiction to our assumption. Therefore, atleast one of $\lambda^2 +q_2$ and $\lambda+q_1$ 
do not have zeros in $\Omega$. \\
Case $(1)$: Assume that $\lambda^2 +q_2$ does not have zeros in $\Omega$. \\
If there is a vector $z \in \mathbb{H}^n \setminus \{0\}$ such that 
$\langle A_1y, z \rangle = 0$ and $\langle A_2 y, z \rangle \neq 0$, then 
\begin{align*}
z^* A_2 y \lambda^2 + z^* A_1 y \lambda + z^* A_0 y & = z^* A_2y(\lambda^2 + q_1) + 
z^* A_1 y (\lambda + q_1) \hspace{0.5cm} \text{by} \hspace{0.2cm} \eqref{Eqn-*} \\
& = z^*A_2y (\lambda^2 +q_2) \\
& \neq 0 \hspace{0.3cm} \text{for all} \hspace{0.1cm} \lambda \in \Omega. 
\end{align*}
This implies $P(\lambda)$ is hyperstable with respect to $\Omega$. \\
Suppose there is no vector $z \in \mathbb{H}^n \setminus \{0\}$ such that 
$\langle A_1y, z \rangle = 0$ and $\langle A_2 y, z \rangle \neq 0$. Then 
$A_2y \in \textit{span}_{\mathbb{H}}\{A_1y\}$ and $A_2y = A_1y q_3$ for some 
$q_3 \in \mathbb{H}$. Substituting in \eqref{Eqn-*}, 
we get 
\begin{equation}\label{Eqn-**}
A_2 y \lambda^2 + A_1 y \lambda + A_0 y = A_1y (q_3 \lambda^2 + \lambda + q_3 q_2 +q_1). 
\end{equation}
Since $P(\lambda)$ is stable with respect to $\Omega$, we have $A_1y \neq 0$ and 
$q_3 \lambda^2 + \lambda + q_3 q_2 +q_1 \neq 0$ for any $\lambda \in \Omega$. 
Taking $z = A_1y$, we have
\begin{align*}
z^* A_2 y \lambda^2 + z^* A_1 y \lambda + z^* A_0 y & = z^* A_1y (q_3 \lambda^2 + 
\lambda + q_3 q_2  +q_1) \hspace{0.5cm} \text{by} \hspace{0.2cm} \eqref{Eqn-**} \\
& = (A_1y)^* (A_1y) (q_3 \lambda^2 + \lambda + q_3 q_2 +q_1) \\
& \neq 0 \hspace{0.3cm} \text{for all } \lambda \in \Omega. 
\end{align*}
Therefore, $P(\lambda)$ is hyperstable with respect to $\Omega$.\\
Case(2): Assume that $\lambda + q_1$ does not have zeros in $\Omega$. \\
If there exists $z \in \mathbb{H}^n \setminus \{0\}$ such that 
$\langle A_2y, z \rangle = 0$ and $\langle A_1 y, z \rangle \neq 0$, then 
\begin{align*}
z^* A_2 y \lambda^2 + z^* A_1 y \lambda + z^* A_0 y & = z^* A_2y(\lambda^2 + q_1) 
+ z^* A_1 y (\lambda + q_1) \hspace{0.5cm} \text{by} \hspace{0.2cm} \eqref{Eqn-*} \\
& = z^*A_1y (\lambda + q_1) \\
& \neq 0 \hspace{0.3cm} \text{for all } \lambda \in \Omega. 
\end{align*}
This implies $P(\lambda)$ is hyperstable with respect to $\Omega$. Therefore, 
we may assume that there is no $z \in \mathbb{H}^n \setminus \{0\}$ such that 
$\langle A_2y, z \rangle = 0$ and $\langle A_1 y, z \rangle \neq 0$. This implies 
$A_1y \in \textit{span}_{\mathbb{H}}\{A_2y \}$, so that 
$A_1y = A_2y q_4$ for some $q_4 \in \mathbb{H}$. Substituting in \eqref{Eqn-*}, 
we get 
\begin{equation}\label{Eqn-***}
A_2 y \lambda^2 + A_1 y \lambda + A_0 y = A_2 y ( \lambda^2 + q_4 \lambda +q_2 + q_4 q_1). 
\end{equation}
Once again, since $P(\lambda)$ is stable with respect to $\Omega$, we see that 
$A_2y \neq 0$ and the scalar polynomial $\lambda^2 + q_4 \lambda +q_2 + q_4 q_1$ 
does not have any zeros in $\Omega$. Now by taking $z = A_2y$ we have, 
\begin{align*}
z^* A_2 y \lambda^2 + z^* A_1 y \lambda + z^* A_0 y & = z^* A_2 y ( \lambda^2 + q_4 
\lambda +q_2 + q_4 q_1) \hspace{0.5cm} \text{by} \hspace{0.2cm} \eqref{Eqn-***} \\
& = (A_2 y)^* (A_2 y) ( \lambda^2 + q_4 \lambda +q_2 +q_4 q_1) \\
& \neq 0 \hspace{0.3cm} \text{for all } \lambda \in \Omega. 
\end{align*}
Thus, $P(\lambda)$ is hyperstable with respect to $\Omega$.
\end{proof}

\medskip
\noindent
We end with the following theorem where a sufficient condition for hyperstability of 
a cubic matrix polynomial in one variable via stability of a multivariate quaternion 
matrix polynomial can be derived. We skip the proof as the proof technique is similar 
to that of the quadratic case presented in the previous theorem.

\medskip
\begin{theorem}
Let $P(\lambda) = A_0 \lambda^3 + A_2 \lambda^2 + A_1 \lambda +A_0$ be a cubic 
quaternion matrix polynomial. Let $\Omega \subseteq \mathbb{H}$ be nonempty and 
$0 \notin \Omega$. Then $P(\lambda)$ is hyperstable with respect to $\Omega$, 
if the multivariate right matrix polynomial 
$P(\lambda_1 , \lambda_2) = A_0 \lambda_2^3 + A_2 \lambda_1 \lambda_2 + 
A_1 \lambda_1 +A_0$ is stable with respect to $\Omega^2$.
\end{theorem}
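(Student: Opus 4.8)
The plan is to follow the template of Theorem~\ref{multivariate-univariate-1}. First, setting $\lambda_1=\lambda_2=\lambda$ in $P(\lambda_1,\lambda_2)=A_0\lambda_2^3+A_2\lambda_1\lambda_2+A_1\lambda_1+A_0$ recovers $P(\lambda)$, so stability of the multivariate polynomial with respect to $\Omega^2$ forces $P(\lambda)$ to be stable with respect to $\Omega$; equivalently, by Proposition~\ref{Prop-stability and eigenvalues}, $P(\lambda)$ has no eigenvalue in $\Omega$. Now fix $y\in\mathbb{H}^n\setminus\{0\}$ and abbreviate $v_i:=A_iy$ for $i=0,1,2$ (not all zero, else $P(\lambda)y\equiv 0$ would contradict the absence of eigenvalues). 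For any $z$, writing $c_i:=z^\ast v_i$, we have
\[
z^\ast P(\lambda)y = c_0\lambda^3 + c_2\lambda^2 + c_1\lambda + c_0 = c_0(\lambda^3+1) + c_2\lambda^2 + c_1\lambda ,
\]
and hyperstability amounts to producing a single $z\neq 0$ making this quaternionic scalar polynomial nonzero for every $\lambda\in\Omega$.

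The key leverage is the hypothesis $0\notin\Omega$: the monomials $\lambda$ and $\lambda^2$ never vanish on $\Omega$, whereas the factor $\lambda^3+1$ attached to $v_0$ can. So I would first try to choose $z$ orthogonal to $v_0$ (annihilating the dangerous term) and isolate a nonzero multiple of $\lambda$ or $\lambda^2$. Concretely, if $v_1\notin\operatorname{span}_{\mathbb{H}}\{v_0,v_2\}$, pick $z\perp v_0,v_2$ with $z^\ast v_1\neq 0$, giving $z^\ast P(\lambda)y=c_1\lambda\neq 0$ on $\Omega$; symmetrically if $v_2\notin\operatorname{span}_{\mathbb{H}}\{v_0,v_1\}$ one isolates $c_2\lambda^2$. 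When $v_0,v_1,v_2$ are all right-multiples of a single vector $v$, take $z=v$ and use that $P(\lambda)y\neq 0$ on $\Omega$ to conclude $z^\ast P(\lambda)y=\|v\|^2(\text{scalar})\neq 0$. These choices dispatch every configuration except the \emph{coupled} one, in which $\operatorname{span}_{\mathbb{H}}\{v_0,v_1,v_2\}$ is two--dimensional with $v_1,v_2$ aligned modulo $v_0$, say $v_2=v_1\rho+v_0\sigma$ with $\rho\neq 0$ and $-\rho^{-1}\in\Omega$ (so that the isolated polynomial $c_1(\rho\lambda+1)\lambda$ now has a zero in $\Omega$).

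In this last case one is forced onto the cubic factor. Writing $P(\lambda)y=v_0\,g_0(\lambda)+v_1\,g_1(\lambda)$ with $g_0(\lambda)=\lambda^3+\sigma\lambda^2+1$ and $g_1(\lambda)=(\rho\lambda+1)\lambda$, the choice $z\perp v_1$, $z^\ast v_0\neq 0$ yields $z^\ast P(\lambda)y=c_0\,g_0(\lambda)$, so it remains to show $g_0$ has no zero in $\Omega$; this is exactly where multivariate stability must enter, since the same substitution gives $P(\lambda_1,\lambda_2)y=v_0\,G_0+v_1\,G_1$ with $G_0=\mu_2^3+\sigma\mu_1\mu_2+1$ and $G_1=\rho\mu_1\mu_2+\mu_1$, and stability over $\Omega^2$ says $G_0,G_1$ never vanish simultaneously there. \textbf{The main obstacle is precisely the coupling introduced by the mixed monomial $A_2\lambda_1\lambda_2$}: unlike case (i) of Theorem~\ref{multivariate-univariate-1}, where the two scalar factors lived in separate variables and could be zeroed independently, both $G_0$ and $G_1$ here involve $\mu_1\mu_2$, so one cannot simply combine a zero of one factor with a zero of the other. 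I would resolve this by using $0\notin\Omega$ to rewrite $G_1=0$ as the similarity $\mu_2=-\mu_1^{-1}\rho^{-1}\mu_1$, substitute it into $G_0$, and manufacture from a hypothetical zero $\lambda_0\in\Omega$ of $g_0$ a pair $(\mu_1,\mu_2)\in\Omega^2$ annihilating $P(\lambda_1,\lambda_2)y$, contradicting its stability. Verifying that such a pair can actually be kept inside $\Omega^2$ — reconciling the similarity constraint forced by the coupling with membership in $\Omega$ — is the delicate point on which the argument turns, and is why the hypothesis fixes this particular splitting of the variables together with $0\notin\Omega$. The converse implication is immediate from the definitions.
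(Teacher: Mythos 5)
First, note that the paper itself gives no proof of this theorem; it only asserts that the argument is "similar to the quadratic case" (Theorem \ref{multivariate-univariate-1}), so the only benchmark is that template, which you do follow for the easy configurations. Your reduction of the problem to the two scalar factors $g_0(\lambda)=\lambda^3+\sigma\lambda^2+1$ and $g_1(\lambda)=(\rho\lambda+1)\lambda$ in the coupled case, and your handling of the configurations where one of $v_0,v_1,v_2$ sits outside the span of the other two (or all are right multiples of one vector), are correct and in the spirit of the paper.

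The problem is that the step you yourself flag as "the delicate point" is a genuine gap, and the route you propose for closing it fails. You want to show that a zero $\lambda_0\in\Omega$ of $g_0$ (in the presence of $-\rho^{-1}\in\Omega$) contradicts stability of $P(\lambda_1,\lambda_2)$ over $\Omega^2$ by producing a common zero of $G_0=\mu_2^3+\sigma\mu_1\mu_2+1$ and $G_1=\rho\mu_1\mu_2+\mu_1$. But $G_1=0$ forces $\mu_2=-\mu_1^{-1}\rho^{-1}\mu_1$, so $\mu_2$ is confined to the similarity class of $-\rho^{-1}$; an arbitrary zero $\lambda_0$ of $g_0$ in $\Omega$ has no reason to lie in that class and cannot serve as $\mu_2$. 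Worse, after substituting $\mu_1\mu_2=-\rho^{-1}\mu_1$ into $G_0$ the simultaneous system collapses to a condition that univariate stability already excludes: for instance when $\sigma=0$ one gets $G_0=-\mu_1^{-1}\rho^{-3}\mu_1+1$, so $G_0=G_1=0$ on $\Omega^2$ is possible only if $\rho^3=1$, which is exactly the statement $g_0(-\rho^{-1})=0$ ruled out by stability of $P(\lambda)$ on the diagonal. In other words, in this configuration the multivariate hypothesis carries no information beyond univariate stability, and no contradiction can be manufactured from $\lambda_0$. A second, structural weakness compounds this: your dichotomy "choose $z$ to isolate $c_0g_0$ or to isolate $c_1g_1$" is not exhaustive — hyperstability may require a $z$ with both $z^\ast v_0\neq 0$ and $z^\ast v_1\neq 0$, i.e., a genuine combination $c_0g_0+c_1g_1$ with both coefficients nonzero — and your argument provides no mechanism for producing such a $z$ when both $g_0$ and $g_1$ vanish somewhere in $\Omega$. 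As it stands the proof is incomplete at precisely the point where the theorem's content lies.
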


\medskip
\noindent
We end the manuscript with the following remark.

\medskip
\begin{remark}
Note that in \cite{Oskar-Wojtylak}, the authors have considered other multivariate 
matrix polynomials as well to obtain hyperstability of a given univariate matrix 
polynomial. However, in the quaternion case, due to noncommutativity of variables 
one cannot conclude hyperstability of a given univariate matrix polynomial. 
\end{remark} 

\bibliographystyle{amsalpha}

\end{document}